\documentclass[10pt]{amsart}
\usepackage{stmaryrd}
\usepackage{textcomp}
\usepackage{enumerate}
\usepackage{setspace}
\usepackage{amssymb}
\usepackage{amsthm}
\usepackage{amsmath}
\usepackage{graphicx}
\usepackage{extarrows}
\usepackage{marvosym}
\usepackage{empheq}
\usepackage{latexsym}
\usepackage{endnotes}
\usepackage{fontenc}
\usepackage{color}
\usepackage{comment}
\usepackage{url}
\usepackage{float}

\usepackage[colorlinks=true, allcolors=blue]{hyperref}

\usepackage[left=1in,right=1in,top=1in,bottom=1in]{geometry}

\newtheorem{theorem}{Theorem}

\newtheorem{lemma}[theorem]{Lemma}
\newtheorem{claim}[theorem]{Claim}
\newtheorem*{claim*}{Claim}

\newtheorem{fact}[theorem]{Fact}

\theoremstyle{definition}
\newtheorem{definition}[theorem]{Definition}
\newtheorem*{definition*}{Definition}

\theoremstyle{remark}

\newtheorem*{remark*}{Remark}

\begin{document}

	\newcommand\marginal[1]{\marginpar{\raggedright\parindent=0pt\tiny #1}}
	
	\theoremstyle{remark}

	\newcommand{\RED}{\color{red}}
	
	\newcommand{\supp}{\mathrm{supp}}
	\def\eps{\varepsilon}
	\def\HH{\mathcal{H}}
	\def\E{\mathbb{E}}
    \def\D{\mathbb{D}}
	\def\C{\mathbb{C}}
	\def\R{\mathbb{R}}
	\def\Z{\mathbb{Z}}
	\def\N{\mathbb{N}}
	\def\PP{\mathbb{P}}
	\def\l{\lambda}
	\def\s{\sigma}
	\def\t{\theta}
	\def\a{\alpha}
	\def\la{\langle}
	\def\ra{\rangle}	
	\def\Xt{\widetilde{X}}
	\def\Pt{\widetilde{P}}
	\def\Var{\mathrm{Var}}
	\def\PV{\mathrm{PV}}
	\def\NV{\mathrm{NV}}
	\def\NN{\mathcal{N}}
	\def\CC{\mathcal{C}}
    \newcommand{\dH}{d_{\mathrm{H}}}
    \newcommand{\CL}{\mathbf{CL}}
    
	\def\cA{\mathcal{A}}
	\def\cQ{\mathcal{Q}}	
	\def\cC{\mathcal{C}}
	\def\F{\mathcal{F}}
	\def\tm{\tilde{\mu}}
	\def\ts{\tilde{\sigma}}
	\def\L{\Lambda}
	\def\Q{\mathcal{Q}}
	\renewcommand{\P}{\mathbb{P}}
	\renewcommand{\Re}{\operatorname{Re}}
	\newcommand{\Kt}{\widetilde{K}}
	\newcommand{\mt}{\widetilde{\mu}}
	\newcommand{\Int}{\operatorname{int}}
	\newcommand{\Ext}{\operatorname{ext}}
	\newcommand{\nut}{\widetilde{\nu}}
	\newcommand{\mes}{\mathrm{mes}}
	\newcommand{\Cov}{\operatorname{Cov}}
	\newcommand{\cE}{\mathcal{E}}
    
    \newcommand{\Lim}{\operatorname*{Lim}}

    \newcommand{\oren}[1]{{\color{blue}{\tt [OY: #1]}}}
    \newcommand{\marcus}[1]{{\color{red}{\tt [MM: #1]}}}

	\title{Dragon curves in Littlewood roots}

    \author[Marcus Michelen]{Marcus Michelen}
    \address{Department of Mathematics, Northwestern University}
    \email{michelen@northwestern.edu}

    \author[Oren Yakir]{Oren Yakir}
    \address{Department of Mathematics, Massachusetts Institute of Technology}
    \email{oren.yakir@gmail.com}

	\begin{abstract}
    
    A Littlewood polynomial is a polynomial whose coefficients lie in $\{- 1, +1\}$. While the majority of roots of a Littlewood polynomial of large degree are near the unit circle, numerical experiments suggest that when plotting the roots of \emph{all} Littlewood polynomials of a given large degree, striking fractal structures appear away from the unit circle. These fractals resemble the attractor of a certain iterated function system and are known as \emph{dragon curves}. In this note, we provide a rigorous explanation of this phenomenon, along with an analysis of a random variant, saying that such fractal behavior is typical.    
	\end{abstract}
	
	\maketitle

    \vspace{-10mm}
    \begin{figure}[H]
		 	\centering	\scalebox{0.1}{\includegraphics{littlewood_d26_cropped.jpg}}
            \vspace{-2mm}
            \caption{Heat map for all the roots of polynomials with $\pm 1$ coefficients of degree $26$. }
            \label{figure:beauty_of_roots_with_random}
		\end{figure}
    
    \noindent
    A \textbf{Littlewood polynomial} is a polynomial with coefficients in $\{-1,+1\}$. We denote by $\mathcal{L}_n$ the set of all Littlewood polynomials of degree $n\ge 1$. Let $\D = \{z\in \C \, : \, |z|<1\}$ denote the open unit disk. In this paper, we study the (microscopic) structure of the set of all possible roots of Littlewood polynomials of large degree in the disk, namely 
    \begin{equation} \label{eq:def_of_Z_n}
        \mathcal{Z}_n = \Big\{ \alpha\in \D \, :\, f(\alpha) = 0 \ \text{for some } f\in \mathcal{L}_n\Big\} \, ,
    \end{equation}
    as $n\to \infty$. We note that whenever $\alpha$ is a root of the polynomial $f(z)\in \mathcal{L}_n$, then $\alpha^{-1}$ is a root of the polynomial $z^{n} f(z^{-1}) \in \mathcal{L}_n$. Hence, for our purposes, we lose no generality by restricting our attention to roots lying strictly within the unit disk $\D$.
    
    Similarly, a \textbf{Littlewood series} is a Taylor series centered at $z=0$ with coefficients in $\{-1,+1\}$, and we denote by $\mathcal{L}_\infty$ the set of all Littlewood series. Any Littlewood series defines an analytic function in $\D$ with unit radius of convergence. Let 
    \begin{equation*}
        \mathcal{Z}_\infty = \Big\{ \alpha\in \D \, :\, f(\alpha) = 0 \ \text{for some } f\in \mathcal{L}_\infty \Big\} \, .
    \end{equation*}
    Noting that for any $f \in \mathcal{L}_n$ we have $f(z)(1 - z^{n+1})^{-1} \in \mathcal{L}_\infty$ shows that $
    \bigcup_{n\ge 1} \mathcal{Z}_n \subset \mathcal{Z}_\infty $ and that this inclusion is strict\footnote{It is not hard to check that $1/2\in \mathcal{Z}_\infty$ but $1/2 \notin \mathcal{Z}_n$ for all $n\ge 1$.}. Hurwitz's theorem implies that the closure of this union 
    yields the full set of roots $\mathcal{Z}_\infty$.

    \noindent
    A heat map of the set $\mathcal{Z}_{26}$, shown in Figure~\ref{figure:beauty_of_roots_with_random} (inspired by a similar figure in~\cite{Beauty-of-roots}) reveals many interesting geometric features of this restricted set of roots. Some of these features are well understood. For example, the triangle inequality implies that $\mathcal{Z}_\infty \subset \{\frac{1}{2} \le |z| <1 \}$. Furthermore, a classical result of Erd\H{o}s and Tur\'an~\cite{Erdos-Turan-AOM} implies that most roots of a given Littlewood polynomial cluster around the unit circle as $n\to \infty$. This manifests in Figure~\ref{figure:beauty_of_roots_with_random} as an intensely dark neighborhood of the unit circle. It is also apparent that there are ``holes" in $\mathcal{Z}_{26}$ near roots of unity; see~\cite{Borwein_Pinner_1997} and~\cite[Section~4]{Konyagin-Schlag} for rigorous explanations. However, Bousch~\cite{Bousch} showed that
   $
    \big\{ 2^{-1/4} \le |z| < 1 \big\}\subset \mathcal{Z}_\infty \, ,
    $
    meaning that these holes get filled up as $n\to \infty$. 
    
    A striking, much less understood feature in Figure~\ref{figure:beauty_of_roots_with_random} is the appearance of fractal patterns for roots deep within the interior of the disk. A rigorous explanation for the appearance of these patterns is notably absent from the literature, a gap recently highlighted in the survey paper~\cite{Beauty-of-roots}. The main goal of this note is to bridge this gap. We show that if one has a Littlewood polynomial $P_n$ with a root $\alpha \in \D$ so that $|P_n'(\alpha)|$ is bounded below in terms of $n$ and $\alpha$, then the configuration of roots obtained from considering all Littlewood series with first $n$ coefficients given by $P_n$ is approximately a deterministic fractal (when rescaled and rotated) known as a \emph{dragon curve}.   We also prove that this lower bound on $|P_n'(\alpha)|$ is in fact generic by showing that such a lower bound occurs asymptotically almost-surely for all roots bounded away from the unit circle for random Littlewood power series.  We now properly define the limiting shapes known as dragon curves.

    \subsection*{Dragon curves}
    \label{subsec:what_is_a_dragon}
    For $\alpha\in \D$ we consider the pair of  functions
    \[
    \phi_{+}(w) = 1+\alpha w \, , \qquad \phi_{-}(w) = -1+\alpha w \, .
    \]
    Both maps $\phi_{\pm}:\C\to \C$ are contractions since $|\alpha| < 1$, 
    and by a classical theorem~\cite[Theorem~2.1.1]{Bishop-Peres-book}, there exists a unique non-empty compact set $D_\alpha\subset \C$ such that 
    \begin{equation*}
        D_\alpha = \phi_+(D_\alpha) \, \cup \, \phi_{-}(D_\alpha)\,  .
    \end{equation*}
    This attractor $D_\alpha$ is known as the \textbf{dragon curve} associated with the point $\alpha$. Some pictures of dragon curves are given in Figure~\ref{figure:dragon_curves} below.  In one sense, the connection between dragon curves and Littlewood series is pretty direct, as one can easily verify that
    \begin{equation} \label{eq:dragon_via_values_of_littlewood_series}
            D_\alpha = \big\{ f(\alpha) \, : \, f\in \mathcal{L}_\infty \big\} \, .
    \end{equation}
    We note that a simple self-similarity argument~\cite[Section~2.2]{Bishop-Peres-book} shows that the Hausdorff dimension of $D_\alpha$ is always bounded above by $\log(2)/\log(1/|\alpha|)$, which in particular implies that $D_\alpha$ has an empty interior for all $|\alpha| < 2^{-1/2}$. On the other hand, Shmerkin and Solomyak~\cite{shmerkin-solomyak} proved, among other things, that for almost every point in the annulus \ $\{2^{-1/2} < |\alpha| <1 \}$ the interior of $D_\alpha$ is non-empty.

    \begin{figure}[ht]
\centering
    \includegraphics[width=0.9\textwidth]{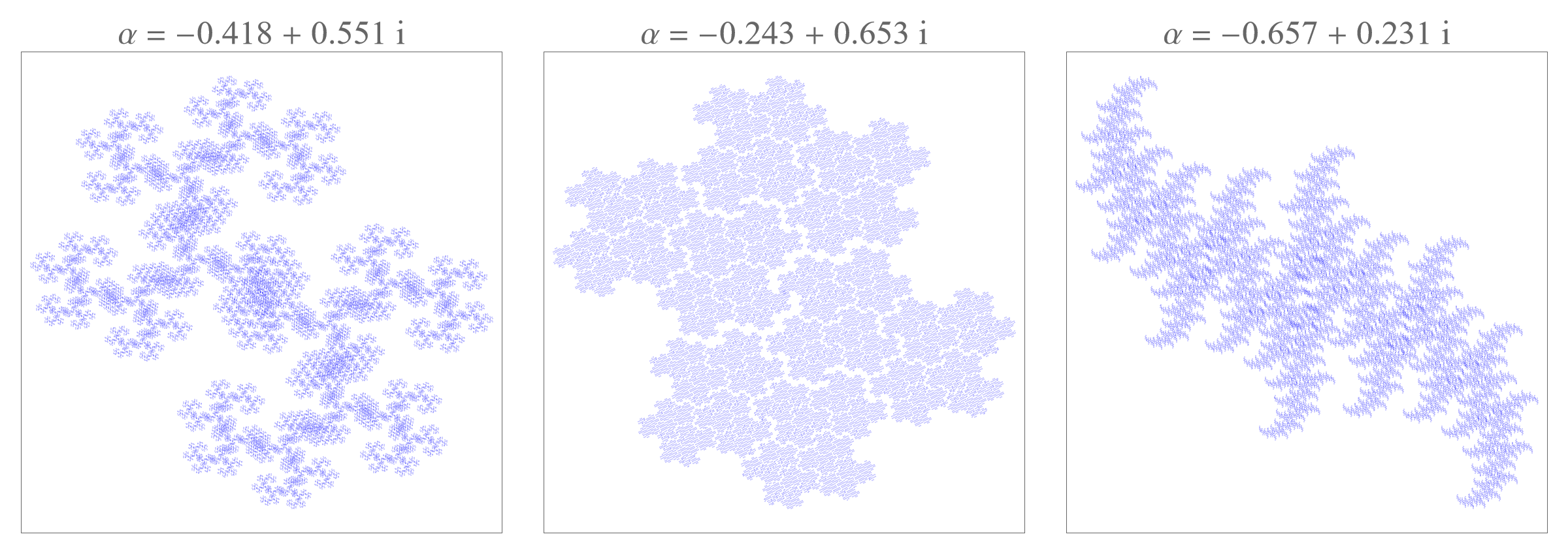}
    \\[2em]
    \includegraphics[width=0.27\textwidth]{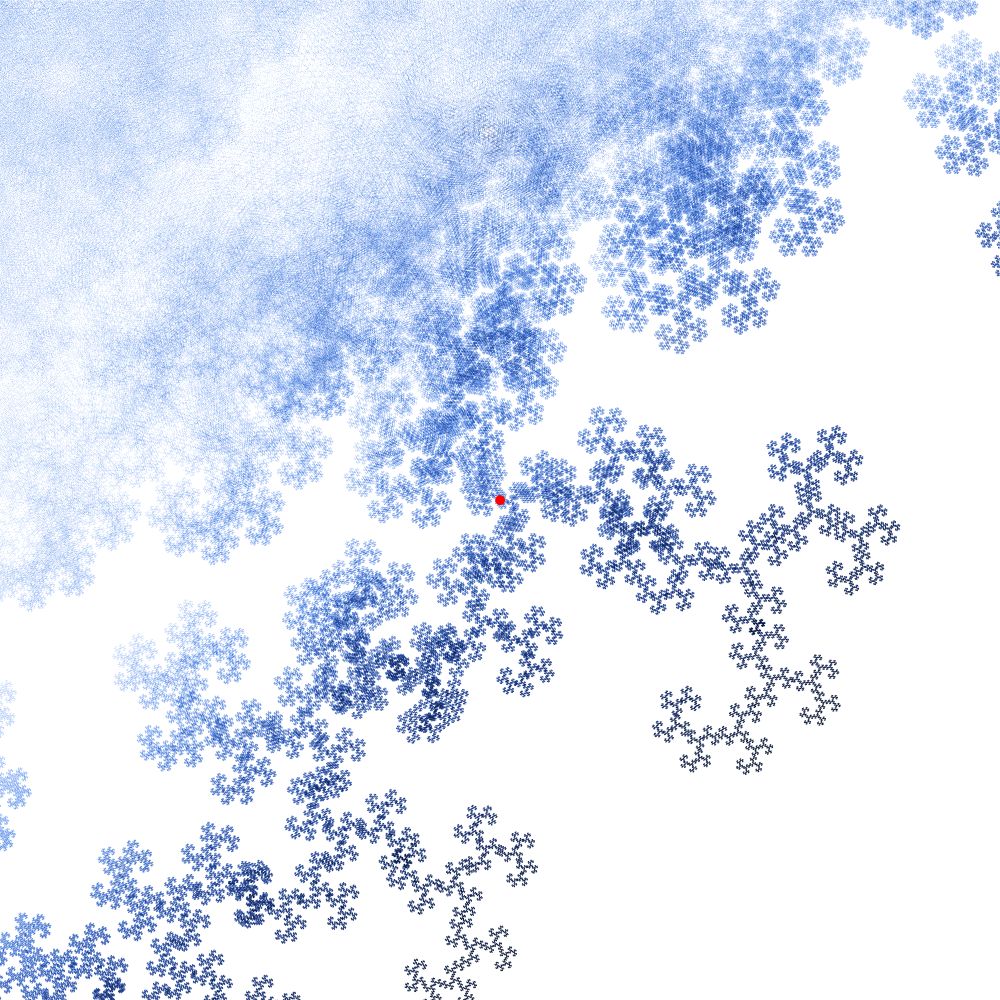}
    \hspace{1mm}
    \includegraphics[width=0.27\textwidth]{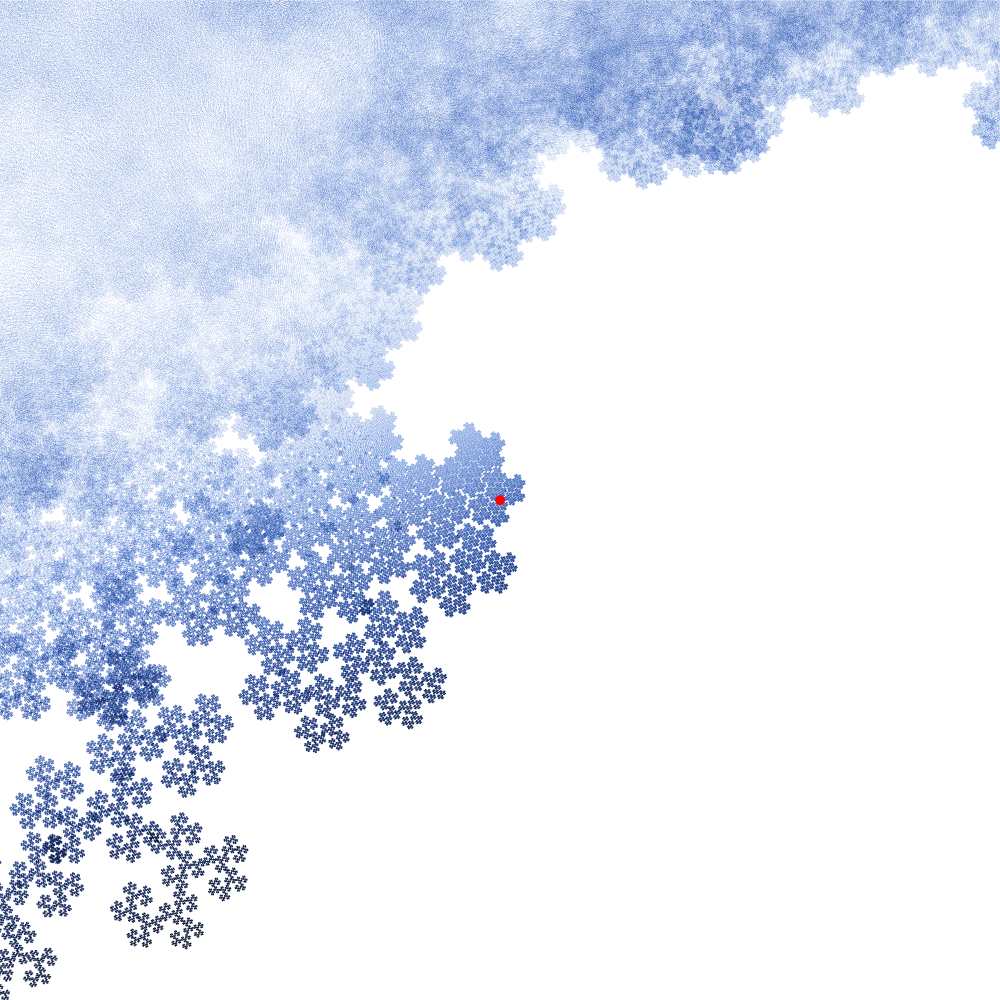}
    \hspace{1mm}
    \includegraphics[width=0.27\textwidth]{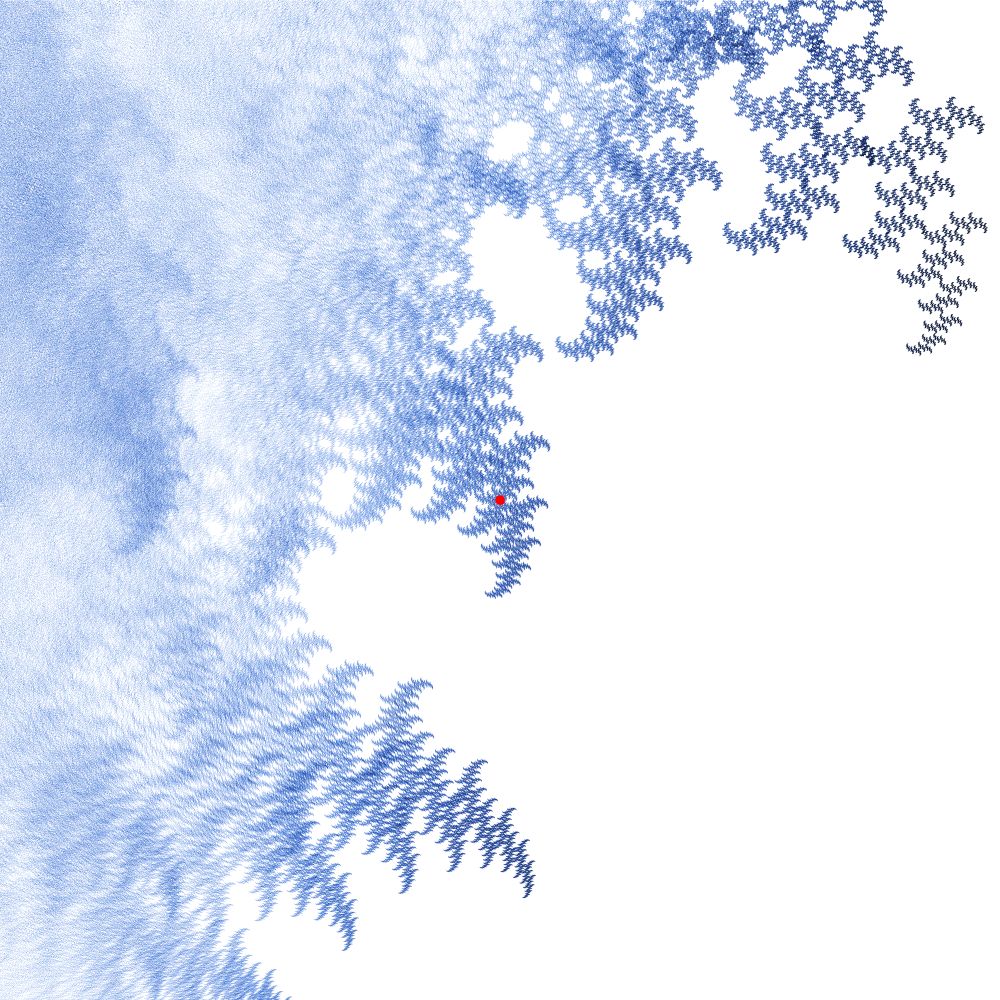}
 \caption{Top row: Dragon curves for three different choices of the parameter $\alpha$. \\ Bottom row: zoomed-in view near the same points (in red) from Figure~\ref{figure:beauty_of_roots_with_random}. 
 }
            \label{figure:dragon_curves}
\end{figure}

    \subsection*{Main deterministic theorem}
    Given a polynomial $P_n\in \mathcal{L}_n$ we will consider all possible ways to extend $P_n$ to a full Littlewood series. This collection is parametrized by $f\in \mathcal{L}_\infty$ via
    \[
    G_{P_n,f}(z) = P_{n}(z) + z^{n+1} f(z) \, . 
    \]
    We define the set of all possible extension roots as
    \begin{equation} \label{eq:def_of_R_F_n}
        R(P_n) = \Big\{ z\in \D \, : G_{P_n,f}(z) = 0 \ \text{for some } \, f\in \mathcal{L}_\infty \Big\} \, .
    \end{equation}
    We say that $P_n \in \mathcal{L}_n$ is the \emph{$n$th prefix} of $P \in \mathcal{L}_\infty$ if all terms up to degree $n$ match.  We note that if $P_n$ is the $n$th prefix of $P$ then $P_n \xrightarrow{n\to \infty} P$ uniformly on compact subsets of $\D$, and $\{R(P_n)\}_{n\ge 1}$ is a sequence of monotonically decreasing subsets of $\D$ (with respect to set inclusion) so that\footnote{The equality~\eqref{eq:roots_of_F_is_intersection_of_R_F_n} may also be understood as equality of multi-sets.} 
    \begin{equation} \label{eq:roots_of_F_is_intersection_of_R_F_n}
    \big\{\alpha\in \D \, : \, P(\alpha) = 0 \big\} = \bigcap_{n\ge 1}  R(P_n) \, .
    \end{equation}
    \setcounter{theorem}{-1}
    \begin{definition} \label{def:kappa_good}
        Let $K\subset \D$ be a compact set, and let $\kappa>0$. A Littlewood series $P\in \mathcal{L}_\infty$ is said to be \emph{$\kappa$-good} (with respect to $K$) if for any $\alpha\in K$ with $P(\alpha) = 0$ we have $|P^\prime(\alpha)| \ge \kappa$. 
    \end{definition}
    \noindent
    For $n\ge 1$ we denote the magnifying affine transformation around $\xi\in \D\setminus\{0\}$ by 
    \begin{equation} \label{eq:def_of_T_n_xi}
    T_{n,\xi} (z) = \frac{z-\xi}{\xi^{n+1}} \, .    
    \end{equation}
    Finally, recall that the Hausdorff distance between two non-empty subsets $A,B\subset \C$ is given by 
    \begin{equation} \label{eq:hausdorff_distance}
        \dH(A,B) = \max\bigg\{\sup_{a\in A} \, \text{dist}\big(a,B\big) \, , \,  \sup_{b\in B} \, \text{dist}\big(b,A\big) \bigg\} \, .
    \end{equation}
    Here, as always, for a point $z\in \C$ and a non-empty $A\subset \C$ we set 
    $
      \displaystyle  \text{dist}(z,A) = \inf_{a\in A} |z-a| \, . 
    $
    We are ready to state our main deterministic result. 
    \begin{theorem}
    \label{thm:main_deter_result}
        Let $K\subset \D$ be a compact set and let $\kappa>0$, then there exists $r_0>0$ so that the following holds. Let $P \in \mathcal{L}_\infty$ be a $\kappa$-good Littlewood series and let $P_n\in \mathcal{L}_n$ be its $n$th prefix.  Suppose $\alpha \in K$ so that $P(\alpha) = 0$.  Then there exists $\alpha_n \in K$ such that $P_n(\alpha_n) = 0$ and $\alpha_n \to \alpha$.
        If we define 
        \[
        R_{n,{\tt loc}} = R(P_n) \cap \D(\alpha,r_0)
        \]
        where $R(P_n)$ is given by~\eqref{eq:def_of_R_F_n}, then
        \[
        \lim_{n\to \infty} \dH\Big(T_{n,\alpha_n}\big(R_{n,{\tt loc}}\big) \, , \, \mathcal{D}_\alpha \Big) = 0
        \]
        where $\mathcal{D}_\alpha = P'(\alpha)^{-1} D_\alpha$, with $D_\alpha$ the dragon curve at $\alpha$ given by~\eqref{eq:dragon_via_values_of_littlewood_series}.
    \end{theorem}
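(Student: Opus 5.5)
We argue by linearization around the root $\alpha_n$ of the prefix, combined with uniform control of the tail. The first step is a preliminary reduction. Since $K\subset\D$ is compact, fix $\rho<1$ with $K\subset\{|z|\le\rho\}$ and let $\rho' = (1+\rho)/2$. Littlewood series are uniformly bounded by $(1-\rho')^{-1}$ on $\{|z|\le \rho'\}$, so by Cauchy estimates $\mathcal{L}_\infty$ is a normal family with uniformly bounded first and second derivatives there. Using $\kappa$-goodness, we choose $r_0>0$ (depending only on $K,\kappa$) so that for every $\kappa$-good $P$ and every root $\alpha\in K$ we have $|P'(z)|\ge\kappa/2$ on $\D(\alpha,2r_0)$. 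It follows that $\alpha$ is the unique zero of $P$ in $\D(\alpha,2r_0)$, and $|P(z)|\ge c|z-\alpha|$ there, with $c=\kappa/2$.

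Since $|P-P_n|\le C\rho'^{\,n}$ on $\D(\alpha,2r_0)$, Rouché's theorem gives a unique zero $\alpha_n$ of $P_n$ with $|\alpha_n-\alpha|\le C'\rho'^{\,n}$. (If $\alpha_n\notin K$, we may either enlarge $K$ slightly or note that the statement is asymptotic.) For the same reason $P_n'(\alpha_n)\to P'(\alpha)$.

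Now let $z\in\D(\alpha,r_0)$ with $G_{P_n,f}(z)=0$. Then $|P_n(z)|=|z|^{n+1}|f(z)|\le C|z|^{n+1}$. From $|P_n(z)|\gtrsim|z-\alpha_n|$ we get $|z-\alpha_n|\le C|z|^{n+1}$. Since $|z|\le|\alpha|+r_0<1$, this forces $|z-\alpha_n|\le C''|\alpha_n|^{n+1}$: if $|z-\alpha_n|=\delta$ then $|z|^{n+1}\le(|\alpha_n|+\delta)^{n+1}$, and a bootstrap shows $\delta=O(|\alpha_n|^{n+1})$, because $(1+\delta/|\alpha_n|)^{n+1}$ stays bounded once $\delta\ll|\alpha_n|/n$.

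Write $z=\alpha_n+\alpha_n^{n+1}w$ with $w=T_{n,\alpha_n}(z)$ and $|w|\le C''$. Taylor expansion gives
\[
P_n(z)=P_n'(\alpha_n)\alpha_n^{n+1}w+O(|\alpha_n|^{2n+2}),\qquad z^{n+1}=\alpha_n^{n+1}\bigl(1+\alpha_n^{n}w\bigr)^{n+1}=\alpha_n^{n+1}(1+o(1)),
\]
and $f(z)=f(\alpha)+o(1)$ uniformly in $f$ by equicontinuity. Dividing the equation $G_{P_n,f}(z)=0$ by $\alpha_n^{n+1}$ yields
\[
F_{n,f}(w):=P_n'(\alpha_n)w+f(\alpha)+\eps_{n,f}(w)=0,
\]
where $\sup_{f}\sup_{|w|\le C''+1}|\eps_{n,f}(w)|\to0$. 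Note that $\alpha_n\ne0$, since $|\alpha|\ge1/2$.

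To conclude, there are two directions. For the first, every zero $w$ satisfies $|w+P_n'(\alpha_n)^{-1}f(\alpha)|=o(1)$, so $w$ lies within $o(1)$ of $-P'(\alpha)^{-1}f(\alpha)\in -\mathcal{D}_\alpha=\mathcal{D}_\alpha$. Here we use that $D_\alpha$ is symmetric under $f\mapsto-f$, by~\eqref{eq:dragon_via_values_of_littlewood_series}. For the converse, given $f\in\mathcal{L}_\infty$ we take $w_0=-P_n'(\alpha_n)^{-1}f(\alpha)$. On the circle $|w-w_0|=\eta$ the linear part has modulus $|P_n'(\alpha_n)|\eta\ge\kappa\eta/3$, which exceeds $|\eps_{n,f}|$ for large $n$. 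Rouché's theorem then gives a zero of $F_{n,f}$ within $\eta$ of $w_0$, and the corresponding $z$ lies in $\D(\alpha,r_0)$ for large $n$. Both bounds are uniform in $f$, which gives Hausdorff convergence.

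The main obstacle is the uniformity of the error terms. Specifically, we need the a priori confinement $|z-\alpha_n|=O(|\alpha_n|^{n+1})$ uniformly over all $f$ and all $z$ in the disk of fixed radius $r_0$, and we need the factor $(z/\alpha_n)^{n+1}\to1$. The bootstrap has to rule out roots $z$ with $|z|>|\alpha_n|$ at distance comparable to $r_0$. This works because $|P_n(z)|\ge c|z-\alpha_n|$ while $|z|^{n+1}\le(\rho+r_0)^{n+1}$ is exponentially small, which forces $|z-\alpha_n|$ to be exponentially small. Iterating then gives $|z|^{n+1}\le|\alpha_n|^{n+1}(1+O(n|\alpha_n|^{n}))^{n+1}$, which closes the bootstrap.
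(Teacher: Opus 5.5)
Your proof is correct. It shares the paper's core mechanism: linearize $P_n$ at $\alpha_n$ in the root equation $P_n(z)=-z^{n+1}f(z)$ and divide by $\alpha_n^{n+1}$. Where it differs is in how the roots are shown to exist and how they are localized.

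The paper first proves a Rouch\'e lemma comparing $G_{P_n,f}$ with the full series $P$ on circles around $\alpha$. This gives, for every $f$, exactly one root $\beta_{n,f}\in\D(\alpha,r_0)$, exponentially close to $\alpha$ uniformly in $f$. Combined with the sharper bound $|\alpha_n-\alpha|\le C\kappa^{-1}|\alpha|^{n+1}$, it then derives the single asymptotic $T_{n,\alpha_n}(\beta_{n,f})=-f(\alpha)/P'(\alpha)+o(1)$, uniformly in $f$. Both halves of the Hausdorff estimate follow from this via $f\mapsto -f$.

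You instead get the localization $|z-\alpha_n|=O(|\alpha_n|^{n+1})$ without Rouch\'e, from the multiplicative lower bound $|P_n(z)|\ge c|z-\alpha_n|$ and a one-step bootstrap. You get existence by applying Rouch\'e to the rescaled, nearly linear equation in $w$.

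Each route buys something different:
\begin{itemize}
\item Your route needs neither uniqueness nor any quantitative rate for $\alpha_n\to\alpha$. Since everything is centered at $\alpha_n$, it also adapts directly to the prefix-only statement in the Remark after the theorem.
\item The paper's route yields the extra structural fact that each extension has exactly one root in $\D(\alpha,r_0)$, so $R_{n,{\tt loc}}=\{\beta_{n,f}\}_{f\in\mathcal{L}_\infty}$ is parametrized by $f$.
\end{itemize}

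One small correction. A pointwise bound $|P'|\ge\kappa/2$ on a disk does not by itself give injectivity or $|P(z)|\ge c|z-\alpha|$. What your choice of $r_0$ actually provides, via the uniform bound on second derivatives, is $|P'(z)-P'(\alpha)|\le\kappa/2$ on the disk. Similarly, for large $n$ you get $|P_n'(z)-P_n'(\alpha_n)|\le|P_n'(\alpha_n)|/2$ for $z\in\D(\alpha,r_0)$. That is the form needed to justify $|P_n(z)|\gtrsim|z-\alpha_n|$, so you should state it that way.
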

    
    We note that our proof of Theorem~\ref{thm:main_deter_result} makes rigorous a heuristic explanation appearing in the literature (e.g.~\cite{Beauty-of-roots, Odlyzko-Poonen}), which, as far as we are aware, has not previously been turned into a mathematical proof.  
    \begin{remark*} 
    Our proof can be easily extended to show a version that does not explicitly require information about the limiting power series $P$.  In particular, given $\kappa > 0$, $\eps > 0$ and compact set $K \subset \D$, there is $n_0$ so that for all $n \geq n_0$ and for all Littlewoods  $P_n \in \mathcal{L}_n$ with root $\alpha_n \in K$ satisfying $|P_n'(\alpha_n)| \geq \kappa$,  $$\dH(T_{n,\alpha_n}(R_{n,{\tt loc}}),P_{n}'(\alpha_n)^{-1} D_{\alpha_n} ) \leq \eps \,. $$
    This may be easily deduced from Theorem \ref{thm:main_deter_result} since our proof will show that the assumption $|P_n'(\alpha_n)| \geq \kappa$ will imply that for all Littlewood power series $G_{P_n,f}$ with $f \in \mathcal{L}_\infty$ there is a nearby root $\alpha$ of $G_{P_n,f}$ with $|G_{P_n,f}'(\alpha)| = \big(1 + o(1)\big) |P_n'(\alpha_n)|$ as $n\to \infty$, where the $o(1)$ term is uniform in $f\in \mathcal{L}_\infty$.
    \end{remark*}
    
    To give a better feeling of what Theorem~\ref{thm:main_deter_result} says, we present a companion simulation in Figure~\ref{figure:illustration_of_main_result}. Consider the following three polynomials in $\mathcal{L}_{12}$:
    \begin{align*} 
            p_1(x) &= -1-x -x^2 + x^3  -x^4 -x^5-x^6-x^7 - x^8 -x^9 -x^{10} -x^{11} -x^{12} \, ,
\\
    p_2(x) &= -1-x -x^2 + x^3 +x^4 -x^5 - x^6 + x^7 + x^8 - x^9 - x^{10} + x^{11} - x^{12} \, ,
\\
   p_3(x) &= -1-x+x^2 + x^3 - x^4 - x^5 - x^6 + x^7 - x^8 + x^9 - x^{10} - x^{11} - x^{12} \, .
    \end{align*}        
    For each $j=1,2,3$, $p_j$ has a root at $\alpha_j$ where
    \[
    \alpha_1 \approx -0.418 + 0.551 \, i \, , \quad  \alpha_2\approx -0.243 + 0.653 \, i \, , \quad \alpha_3 \approx -0.657 + 0.231 \, i \, . 
    \]
    In Figure~\ref{figure:illustration_of_main_result}, we plot each of the roots of $p_j$ along with all roots of its extensions to degree-24 Littlewood polynomials (there are $2^{12}$ such extensions). We also plot a zoomed-in square near the root $\alpha_j$, which should be compared with the images of dragon curves from Figure~\ref{figure:dragon_curves}. 
    \begin{figure}
		 	\centering
            \scalebox{0.3}{\includegraphics{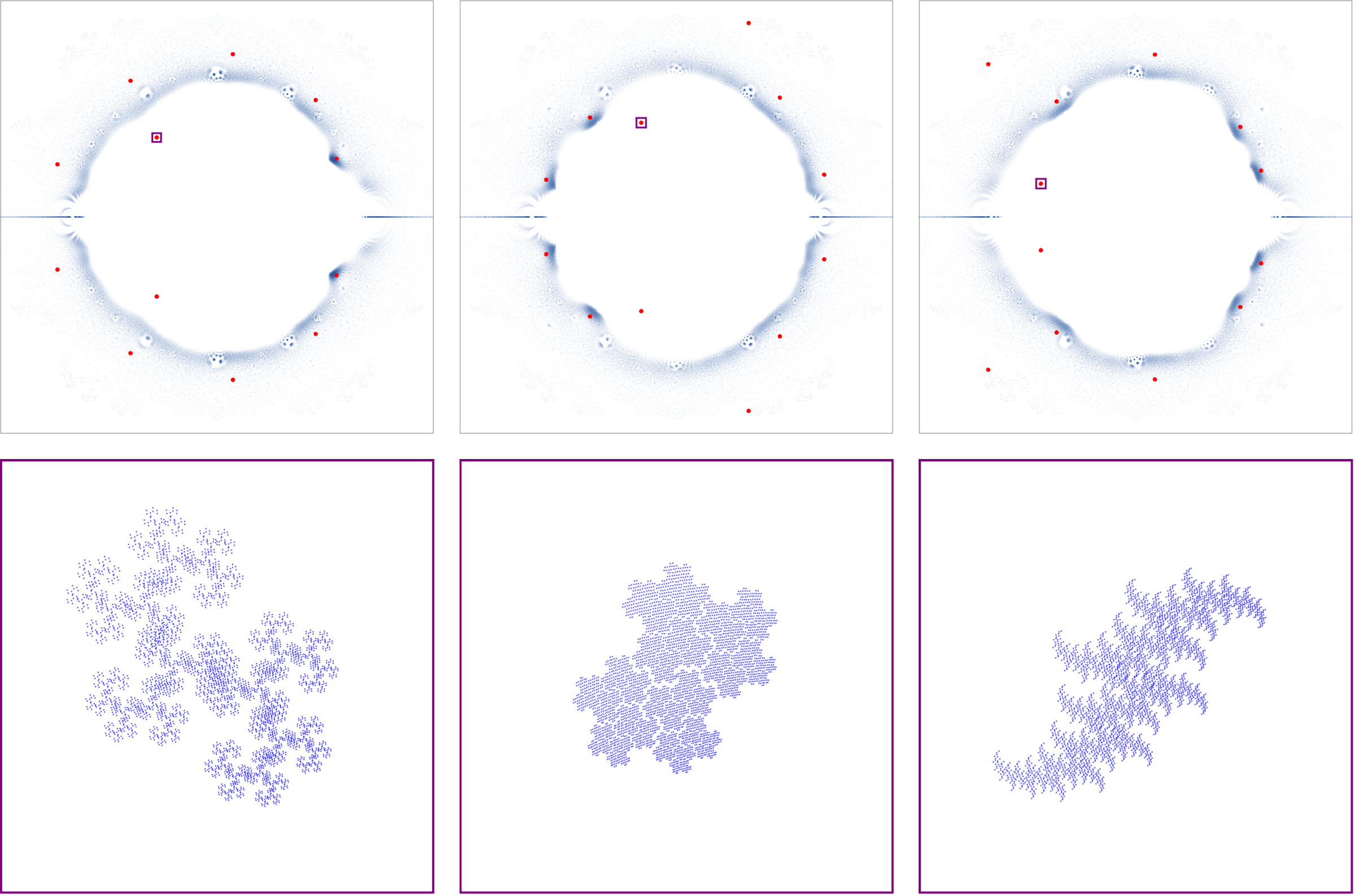}}
            \caption{Top row: roots of the degree-12 polynomials $p_1,p_2,p_3$ (red), together with the roots of all degree-24 Littlewood polynomials extending them (blue). \\ 
            Bottom row: zoomed-in view near the corresponding roots $\alpha_1,\alpha_2,\alpha_3$. Note that the roots resemble the dragons from Figure~\ref{figure:dragon_curves} above, but are multiplied by $p_j^\prime(\alpha_j)^{-1}$.}
            \label{figure:illustration_of_main_result}
		\end{figure}

    We remark that it is possible for a Littlewood polynomial (or series) to have a double root in $\D$. For instance, the Littlewood polynomial\footnote{We found this polynomial from the MathOverflow discussion at \url{https://mathoverflow.net/questions/424408/multiple-roots-of-polynomials-with-coefficients-pm-1}}
    \[
\begin{aligned}
&z^{27}+z^{26}+z^{25}-z^{24}-z^{23}-z^{22}+z^{21}+z^{20}+z^{19}+z^{18}
-z^{17}+z^{16}-z^{15}-z^{14}-z^{13}+z^{12} \\
&\qquad -z^{11}-z^{10}-z^{9}-z^{8}+z^{7}+z^{6}-z^{5}+z^{4}-z^{3}+z^{2}+z-1 \\
&=
\left(
z^{18}+z^{16}+2z^{13}-z^{12}+2z^{11}+z^{10}+3z^{8}+z^{6}
    +2z^{5}+2z^{3}+1
    \right)
    (z^{2}+1)(z-1)(z^{3}+z^{2}-1)^{2}
    \end{aligned}
    \]
    has a repeated factor $z^3+z^2-1$ with roots at $\approx 0.754$ and $
    \approx -0.877 \pm 0.744 \, i$. We will show in Theorem~\ref{thm:no_double_roots_almost_surely} below that such Littlewood polynomials are `rare' with respect to the uniform probability measure on $\mathcal{L}_n$.  This will verify the $\kappa$-good assumption for almost all $f \in \mathcal{L}_\infty$.
    
    \subsection*{The random model}
    There is a natural (uniform) probability measure one can place on $\mathcal{L}_\infty$ which is obtained by randomizing the coefficients. Indeed, let $(\eps_k)$ be a sequence of independent and identically distributed (i.i.d.) random variables such that 
    \[
    \P\big(\eps_k = 1\big) = \P\big(\eps_k = - 1\big) = \frac{1}{2} \, .
    \]
    We define the random Littlewood series $F$ by
    \begin{equation*}
        F(z) = \sum_{k=0}^\infty \eps_k z^k \in \mathcal{L}_\infty \,.
    \end{equation*}
    \begin{theorem}
    \label{thm:no_double_roots_almost_surely}
        Almost surely the random Littlewood series $F$ has no double zeros. In particular, almost surely for all compact $K\subset \D$ we have that $F$ is $\kappa$-good for all sufficiently small $\kappa>0$. 
    \end{theorem}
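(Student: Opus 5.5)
The plan is to reduce the statement to a single compact disk and then run a net argument on the truncated polynomial. Fix $r<1$ and let $E_r$ be the event that $F$ has a zero $z$ with $\tfrac12\le|z|\le r$ and $F'(z)=0$. By the triangle inequality remark in the introduction, $F$ has no zeros in $\{|z|<\tfrac12\}$. Since $\D=\bigcup_r\{|z|\le r\}$, it suffices to show $\P(E_r)=0$ for every fixed $r<1$.

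The second assertion then follows deterministically. If $F$ has only simple zeros and $K\subset\D$ is compact, then $F$ has finitely many zeros in $K$ because $F\not\equiv0$, and $F'\neq0$ at each of them. So $\kappa=\min|F'|$ over these zeros is positive, and $F$ is $\kappa'$-good for every $\kappa'\le\kappa$. Because the event ``no double zeros'' does not depend on $K$, this holds almost surely for all $K$ simultaneously.

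To bound $\P(E_r)$ I will use a net. Cover the annulus $A_r=\{\tfrac12\le|z|\le r\}$ by a $\delta$-net $\mathcal N_\delta$ with $|\mathcal N_\delta|\lesssim\delta^{-2}$. All Littlewood series satisfy the uniform bounds $|F''|,|F'''|\le C_r$ on a slightly larger disk. Hence if $z\in A_r$ is a double zero and $w\in\mathcal N_\delta$ has $|w-z|\le\delta$, Taylor expansion gives
\[
|F(w)|\le C_r\delta^2\, ,\qquad |F'(w)|\le C_r\delta\, .
\]
Next I replace $F$ by its prefix $F_m$ of degree $m\approx 2\log(1/\delta)/\log(1/r)$. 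The tail and its derivative are then $O(\delta^{2}\log(1/\delta))$, so the event above implies
\[
|F_m(w)|\le C\delta^2\log(1/\delta)\, ,\qquad |F_m'(w)|\le C\delta\, .
\]
By the union bound, everything reduces to a small-ball estimate
\[
\sup_{w\in A_r}\P\Big(|F_m(w)|\le \eta_1,\ |F_m'(w)|\le\eta_2\Big)=o(\delta^{2}),\qquad \eta_1=\delta^{2-o(1)},\ \eta_2=C\delta\, .
\]

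The key step, and the one I expect to be the main obstacle, is this joint anti-concentration bound for the random vector $\sum_k\eps_k v_k\in\R^4$, where $v_k=(w^k,kw^{k-1})$. The difficulty is that the law of $F(w)$ alone can be singular. It is the Bernoulli-convolution measure on $D_w$, whose dimension is at most $\log 2/\log(1/|w|)$. Counting sign patterns heuristically, the constraint on $F(w)$ at scale $\delta^2$ costs about $\delta^{2\log2/\log(1/|w|)}$, and the constraint on $F'(w)$ at scale $\delta$ costs another $\delta^{\log2/\log(1/|w|)}$. Since $|w|\ge\tfrac12$ forces $\log2/\log(1/|w|)\ge1$, the total heuristic exponent is at least $3>2$, which leaves room in the union bound. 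The derivative constraint is genuinely needed: using $F(w)$ alone gives exactly the borderline exponent $2$ at $|w|=\tfrac12$.

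To make this rigorous I would first try a conditioning/lacunary-block argument. Condition on all coefficients except those in blocks of indices at scale $|w|^k\approx s$ for dyadic $s$ between $\eta_1$ and $1$. Inside each block, use the two-dimensional (complex) structure together with the pair $(w^k,kw^{k-1})$ to show that at most a bounded proportion of the sign patterns in that block can land in a given box of the relevant size, so each scale contributes a factor bounded away from $1$. If the constants lose too much near $|w|=\tfrac12$, the fallback is an Esseen/Halász-type Fourier bound, controlling $\prod_k|\cos\langle\xi,v_k\rangle|$ over the box dual to $[\eta_1]^2\times[\eta_2]^2$.

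Once the estimate $o(\delta^2)$ holds uniformly in $w\in A_r$, summing over the net and letting $\delta\to0$ gives $\P(E_r)=0$. This completes the proof.
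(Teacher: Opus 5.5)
Your reductions are sound: no zeros in $\{|z|<\tfrac12\}$, exhaustion by $r\uparrow 1$, the deterministic passage to $\kappa$-goodness, and the Taylor/truncation step feeding a union bound over $\asymp\delta^{-2}$ net points. The gap is the step you flag yourself. The bound $\sup_{w\in A_r}\P\big(|F_m(w)|\le\eta_1,\ |F_m'(w)|\le\eta_2\big)=o(\delta^2)$ is the whole difficulty, and the proposal does not prove it.

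It is a small-ball bound exponentially small in $m\asymp\log(1/\delta)$, uniform in the parameter $w$, for a Bernoulli-convolution-type measure with overlaps. The tools you name do not reach it:
\begin{itemize}
\item Erd\H{o}s/Hal\'asz-type Littlewood--Offord inequalities decay only polynomially in $m$, i.e.\ like $(\log 1/\delta)^{-O(1)}$.
\item An Esseen/Fourier bound needs the characteristic function to be small on most of the dual box, uniformly in $w$. By Erd\H{o}s's theorem the Fourier transform of the law of $F(w)$ does not even tend to zero when $w$ is the reciprocal of a Pisot number. This includes the paper's explicit real double root $w\approx 0.7549$, which is $1/\theta$ for the plastic number $\theta$ (the root of $x^3-x-1$). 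At that point $(0,0)$ genuinely lies in the support of the law of $(F(w),F'(w))$.
\item Your lacunary sketch does not engage with such arithmetic parameters at all.
\end{itemize}

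Your heuristic is also miscounted. With $|w|^N\approx\delta^2$, the first $N$ signs already fix $F'(w)$ up to $O(N\delta^2)=o(\delta)$. For $|w|<2^{-1/2}$, heuristically only $O(1)$ such sign patterns put $F(w)$ in a given $\delta^2$-ball. So pointwise the derivative condition contributes no extra factor $\delta^{\log2/\log(1/|w|)}$: the joint probability is either comparable to $\P(|F(w)|\le\eta_1)$ or zero. Making your approach work would require two further inputs you do not supply:
\begin{itemize}
\item the law of $F(w)$ has local dimension strictly above $1$ at $0$, uniformly wherever near-double roots of Littlewood series exist;
\item a deterministic exclusion of near-double roots near $|w|=\tfrac12$.
\end{itemize}

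The missing idea in the paper is to avoid any polynomial-in-$\delta$ small-ball estimate by replacing your deterministic net with a random net of bounded size, measurable with respect to $\mathcal F_m=\sigma(\eps_0,\dots,\eps_m)$. Its points are the zeros of $P_m^{(j)}$ in a fixed disk of radius $<1$, and Jensen's formula gives at most $C(r,j)$ of them. On $\mathcal E_{r,j}(\gamma)$, Rouch\'e's theorem (using $|F^{(j+1)}(z_0)|>\gamma$) places such a zero $w$ within an exponentially small distance $t$ of the multiple zero $z_0$. Taylor expansion then gives $|F^{(j-1)}(w)|\lesssim t^2$. Conditionally on $\mathcal F_m$, this traps the independent tail $T_m^{(j-1)}(w)$ in a ball exponentially smaller than $|w|^m$, the size of its leading terms. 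Erd\H{o}s's Littlewood--Offord bound, applied to the $\asymp m$ tail terms above that scale, makes this probability $O(m^{-1/2})$. Because the union bound now runs over $O(1)$ candidates instead of $\delta^{-2}$ net points, this very weak anti-concentration suffices. The parameters $j$ and $\gamma$ handle zeros of higher multiplicity.
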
 
    \noindent
    The result above shows that the conclusion of Theorem~\ref{thm:main_deter_result} can be applied to any (almost sure) realization of the random Taylor series $F$. We also note that Theorem~\ref{thm:no_double_roots_almost_surely} is a direct consequence of a more general theorem proved in our previous work~\cite[Theorem~1.3]{Michelen-Yakir-root-separation} which proves an analogous theorem for more general choices of random coefficients.  For ease of readability, we provide a  self-contained (and simplified) proof of Theorem~\ref{thm:no_double_roots_almost_surely} in Section~\ref{sec:proof_of_no_double_roots} below.
    
    To take a step further and discuss the global scaling limit for all root extensions in the disk $\D$---as opposed to just looking at the small disk of radius $r_0$ around a fixed root, as done in Theorem~\ref{thm:main_deter_result}---we need to introduce a coarser topology on closed sets than the one given by the Hausdorff metric.

    \subsection*{The Fell topology}
    Denote by $\CL(\C)$ the collection of all non-empty closed subsets of $\C$. When discussing global scaling limits for (random) root extensions (see Theorem~\ref{thm:tagged_point_process_convergence} below), we will require a notion of convergence for sequences in $\CL(\C)$. One natural candidate would be the Hausdorff metric topology~\eqref{eq:hausdorff_distance} which we used in Theorem~\ref{thm:main_deter_result}, but since the scaled set of roots can (and will) become unbounded as $n \to \infty$, we will require a more flexible notion of convergence.

    For a sequence $\{A_n\}\subset \CL(\C)$, we define the lower (inferior) and upper (superior) limits as
    \begin{align} \label{eq:def_of_limits_of_sets}
        \text{Li}(A_n) &= \Big\{ z\in \C \, : \, \limsup_{n\to \infty} \,  \text{dist}(z,A_n) = 0 \Big\} \, , \nonumber \\  \text{Ls}(A_n) &= \Big\{ z\in \C \, : \, \liminf_{n\to \infty} \,  \text{dist}(z,A_n) = 0 \Big\} \, .
    \end{align}
    It is straightforward to check that $\text{Li}(A_n) \subset \text{Ls}(A_n)$. If these sets coincide, their common value is defined as the \textbf{Kuratowski limit} of $\{A_n\}$, and we shall denote this limit by 
    \begin{equation} \label{eq:kuratowski_convergence}
    \Lim_{n\to \infty} A_n =  \text{Li}(A_n) = \text{Ls}(A_n) \, .    
    \end{equation}
    This convergence gives the topology on $\CL(\C)$, known as the \textbf{Fell topology}, see~\cite[Chapter~5]{Beer-topology-book}. The Fell topology is coarser than the standard Hausdorff metric topology, as it lets elements of $A_n$ escape to infinity without preventing convergence. For example, the sequence of sets $A_n = \{0,n\}$ converges in the Kuratowski sense to $\{0\}$, although $\dH(\{0\},\{0,n\}) = n$.
    When restricting to closed subsets contained (uniformly) inside a fixed compact domain in the plane, the Fell topology and the Hausdorff topology coincide.

    \subsection*{Point process convergence}
    For a sequence of random signs $(\eps_k)$ we consider 
    \begin{equation}
        \label{eq:reminder_of_F_n_and_F}
        F(z) = \sum_{k=0}^\infty \eps_k z^k \quad \text{and} \quad F_n(z) = \sum_{k=0}^n \eps_k z^k \, ,
    \end{equation}
    as random elements of $\mathcal{L}_\infty$ and $\mathcal{L}_n$, respectively. For $f\in \mathcal{L}_\infty$ we denote by
    \begin{equation*}
        G_{n,f}(z) = F_n(z) + z^{n+1} f(z) \, .
    \end{equation*}
    As before, the collection $\{G_{n,f}\}$ parametrizes all possible extensions of the random polynomial $F_n$ to the full Littlewood series.  We now consider the random closed set of all possible roots
    \begin{equation} \label{eq:def_of_R_n}
        R_n = R(F_n) =  \Big\{ z\in \D \, : \, G_{n,f}(z) = 0 \ \text{for some} \ f\in \mathcal{L}_\infty \Big\} \, .
    \end{equation}
    Recall that $T_{n,\alpha} (z) = \alpha^{-n-1} (z-\alpha)$. The next result shows that  clusters of roots in $\mathcal{Z}_\infty$ converge, in a probabilistic sense, to randomly scaled and rotated dragon curves. This gives a rigorous explanation as to why the dragon curves are visually present in Figure~\ref{figure:beauty_of_roots_with_random}. 
    \begin{theorem} \label{thm:tagged_point_process_convergence}
    For $F_n,F$ given by~\eqref{eq:reminder_of_F_n_and_F}, we have that
    \[
    \Big\{\big(\alpha, T_{n,\alpha}(R_n)\big) \, : \, \alpha\in \mathcal{Z}(F_n) \Big\} \xrightarrow{n\to \infty} \Big\{\big(\alpha, \mathcal{D}_\alpha\big) \, : \, \alpha\in \mathcal{Z}(F) \Big\}
    \]
    in distribution, with respect to the vague topology on (tagged) point processes on $\normalfont \D\times \textbf{CL}(\C)$.
    \end{theorem}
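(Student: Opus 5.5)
Almost surely $F$ has no double zeros (Theorem~\ref{thm:no_double_roots_almost_surely}), so I would reduce Theorem~\ref{thm:tagged_point_process_convergence} to a deterministic statement about good realizations of $F$. The main input is the uniform version of Theorem~\ref{thm:main_deter_result} noted in the Remark after it. By definition of vague convergence, it suffices to show the following. For every compact $K\subset\D$ whose boundary almost surely contains no zero of $F$, and every bounded continuous test function $\Phi$ on $K\times\CL(\C)$ (Fell topology, which is compact metrizable), we have
\[
\sum_{\alpha\in\mathcal{Z}(F_n)\cap K}\Phi\big(\alpha,T_{n,\alpha}(R_n)\big)\;\longrightarrow\;\sum_{\alpha\in\mathcal{Z}(F)\cap K}\Phi(\alpha,\mathcal{D}_\alpha)
\]
almost surely, hence in distribution. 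In fact I expect almost sure convergence, since $F_n\to F$ locally uniformly and everything is then deterministic given $F$.

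The first step handles the tags. Fix a realization where $F$ is $\kappa$-good on a slightly larger compact $K'$ and has no zeros on $\partial K$. Hurwitz's theorem gives a bijection, for large $n$, between the zeros of $F_n$ in $K$ and the zeros of $F$ in $K$, with $\alpha_n\to\alpha$. This is exactly the $\alpha_n$ of Theorem~\ref{thm:main_deter_result}.

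The second step handles the marks. Theorem~\ref{thm:main_deter_result} is centered at $\alpha_n$, and it gives
\[
T_{n,\alpha_n}\big(R(F_n)\cap\D(\alpha,r_0)\big)\to\mathcal{D}_\alpha
\]
in the Hausdorff metric, hence in the Fell topology. It remains to pass from the local set $R_{n,{\tt loc}}$ to the full set $R_n$. Points of $R_n$ outside $\D(\alpha,r_0)$ lie at distance at least $r_0/2$ from $\alpha_n$. Under $T_{n,\alpha_n}$ they are therefore sent to distance at least $(r_0/2)|\alpha_n|^{-n-1}\to\infty$, because $|\alpha_n|\le \max_K|z|<1$. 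Such points escape to infinity and do not affect the Kuratowski limit, since $\mathrm{Li}$ and $\mathrm{Ls}$ only see bounded regions. This is precisely why the Fell topology is used. Consequently $T_{n,\alpha_n}(R_n)\to\mathcal{D}_\alpha$ in $\CL(\C)$.

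Summing over the finitely many zeros and using continuity of $\Phi$ gives the convergence of the displayed sums. Letting $\kappa\downarrow0$ along a countable sequence and exhausting $\D$ by compacts, the almost sure statement follows, and with it convergence in distribution.

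The main obstacle is the uniformity needed to apply Theorem~\ref{thm:main_deter_result} pathwise. That theorem fixes $K$ and $\kappa$ and provides $r_0$, and the $\kappa$ here is random but almost surely positive. This is fine pathwise, because for each realization we may choose $\kappa(\omega)>0$, which Theorem~\ref{thm:no_double_roots_almost_surely} guarantees. One also has to check that no extra zeros of $F_n$ near $\partial K$ break the bijection. I would handle this by restricting to test functions supported on compact sets $K$ with $\P(\mathcal{Z}(F)\cap\partial K\neq\emptyset)=0$. Such sets are plentiful, for example disks of all but countably many radii, because the intensity of the zero set of $F$ is absolutely continuous.
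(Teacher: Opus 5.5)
Your proposal is correct and is essentially the paper's argument. It proves almost-sure convergence of the test-function sums, labels the almost surely simple zeros via Hurwitz, applies Theorem~\ref{thm:main_deter_result} pathwise with a random but positive $\kappa$ to get $\mathcal{D}_\alpha\subset\mathrm{Li}$, and notes that extension roots outside $\D(\alpha,r_0)$ are sent by $T_{n,\alpha_n}$ to distance at least $(r_0/2)|\alpha_n|^{-n-1}\to\infty$, which gives $\mathrm{Ls}\subset\mathcal{D}_\alpha$ (the paper phrases this last step via the hit-and-miss criterion).

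One small correction: the zero intensity of $F$ is not absolutely continuous, because $F$ has real coefficients and so has real zeros with positive probability. Your choice of radii is still fine, since the intensity is locally finite by Claim~\ref{claim:Jensen_bound_on_number_of_roots} and hence charges at most countably many circles; alternatively, simply pick the radius pathwise, as you only need almost-sure convergence.
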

    \noindent
    In words, Theorem~\ref{thm:tagged_point_process_convergence} is saying that when zooming in around a typical point in Figure~\ref{figure:beauty_of_roots_with_random}, the local roots nearby arrange approximately along a scaled dragon curve centered at this point. Moreover, this description becomes increasingly accurate as the degree gets large. Additionally, note that Theorem \ref{thm:tagged_point_process_convergence} (and its deterministic counterpart Theorem \ref{thm:main_deter_result}) apply even when $|\alpha|$ is close to $1$.  One can understand these results as saying that the dragon curves still appear in this case, there are simply sufficiently many roots---and the dragons are full-dimensional---that they appear as a smearing near the unit circle in Figure \ref{figure:beauty_of_roots_with_random} rather than visually articulated fractals.
    
    In the course of the proof of Theorem~\ref{thm:tagged_point_process_convergence}, we establish a slightly stronger, almost sure, form of the convergence; see Section~\ref{sec:point_process_convergence} for details. Since convergence in distribution is the more standard formulation in the point process literature, we have chosen to state the theorem in this form.
    
    \subsection*{More related works} 
    The study of roots of Littlewood polynomials has a long history, going back at least to  Littlewood's famous monograph~\cite{Littlewood}. We do not attempt to give a full detailed account of this history; instead, we only mention the excellent book\footnote{In fact, a version of Figure~\ref{figure:beauty_of_roots_with_random} appears on the front cover of~\cite{Borwein-book}.} by Borwein~\cite{Borwein-book} that highlights some known techniques and many open problems. 

    Some other aspects of the (closure of the) 
    set of Littlewood roots $\mathcal{Z}_\infty$ have been studied before, mostly from the topological point of view. Bousch~\cite{Bousch} proved that $\mathcal{Z}_\infty$ is connected and locally connected. We also mention a related result by Odlyzko and Poonen~\cite{Odlyzko-Poonen}, which showed that the set of all roots of power series with $\{0,1\}$ coefficients is path-connected. Recently, Calegari, Koch and Walker~\cite[Theorem~11.3.1]{Calegari-Koch-Walker} showed there is a ``hole" in $\mathcal{Z}_\infty$, namely, a connected component of $\D\setminus \mathcal{Z}_\infty$ which is distinct from the obvious connected component centered at the origin. We also note that, from the dynamical systems point of view, it is slightly easier to study the set of all roots of power series with $\{-1,0,1\}$ coefficients, and indeed the study of topological properties of this set has a vast literature (see~\cite{Bandt, Barnsley_Harrington, Calegari-Koch-Walker, Solomyak_Xu}, to name a few). Finally, we also mention a recent paper by Hokken~\cite{Hokken_topology_of_zeros_square_disc}, showing that any point in $\mathcal{Z}_\infty$ can be approximated by a sequence of roots in $\mathcal{Z}_n$, for which the corresponding polynomial has a square discriminant.

    \subsubsection*{Notation} We denote by $\mathcal{L}_n$ the set of all Littlewood polynomials of degree $n\ge 1$, and by $\mathcal{L}_\infty$ the set of all Littlewood series. For $z\in \C$ and $r>0$, we always denote by $\D(z,r) = \{w\in \C \, : |w-z|<r\}$. We will also frequently use the notation $T_{n,\xi}(z) = (z-\xi)/\xi^{n+1}$, see~\eqref{eq:def_of_T_n_xi}. For $\alpha\in \D$ we always denote by $D_\alpha$ the dragon curve associated with the point $\alpha$, see~\eqref{eq:dragon_via_values_of_littlewood_series}. Finally, for an analytic function $g:\D\to \C$, we denote by $\mathcal{Z}(g)$ the (multi-)set of its roots in $\D$. Throughout, we denote by $C>0$ an arbitrary constant that may depend on some fixed non-asymptotic parameters, and may change from line to line. 
    
    \section{Local convergence to dragon curves}
    \label{sec:proof_of_local_convergence_deter}
    \noindent
    Throughout this section, we will denote by $P_n\in \mathcal{L}_n$ a Littlewood polynomial and by $P\in \mathcal{L}_\infty$ a series such that $P_n$ is always the $n$th prefix of $P$; 
    as such, $P_n\to P$ uniformly on compact subsets of $\D$ as $n\to\infty$. A simple application of Hurwitz's theorem (see e.g.~\cite[Chapter~5]{Ahlfors}) shows that convergence on compact sets is sufficient to deduce convergence of roots:
    \begin{fact} 
        \label{fact:almost_sure_convergence_of_roots}
        Let $K\subset \D$ be a compact set. Then for all $n$ large enough,  $P$ and $P_n$ have the same number of roots in $K$ provided $P$ has no roots on $\partial K$. Furthermore, if $\beta \in K$ is a root of $P$ with multiplicity $k\ge 1$, then for all $\eps>0$ small enough there exists $n$ large enough so that $P_n$ has exactly $k$ roots in $\D(\beta,\eps)$. 
    \end{fact}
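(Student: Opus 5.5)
The plan is to get uniform convergence with an explicit rate, and then apply Rouché's theorem on small circles around the zeros of $P$, together with a compactness argument on the rest of $K$.

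\emph{Uniform convergence.} Since $K\subset\D$ is compact, there is $r<1$ with $K\subset\overline{\D(0,r)}$. For $|z|\le r$ we have
\[
|P(z)-P_n(z)| \le \sum_{k>n}|z|^k \le \frac{r^{n+1}}{1-r}\, .
\]
Hence $P_n\to P$ uniformly on $\overline{\D(0,r)}$, and the same holds on any closed disk of radius $r'<1$. We may assume $P\not\equiv 0$; this is automatic since $P(0)=\pm1$. So $P$ has only finitely many zeros $\beta_1,\dots,\beta_m$ in $\overline{\D(0,r')}$ for a fixed $r'\in(r,1)$.

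\emph{Local count (second claim).} Let $\beta$ be a zero of $P$ of multiplicity $k$. Choose $\eps>0$ so small that $\overline{\D(\beta,\eps)}\subset\D$ and $\beta$ is the only zero of $P$ in $\overline{\D(\beta,\eps)}$. Put $\mu=\min_{|z-\beta|=\eps}|P(z)|>0$. By uniform convergence on the circle, $|P_n-P|<\mu\le|P|$ there for all $n$ large. Rouché's theorem then shows that $P_n$ has exactly $k$ zeros in $\D(\beta,\eps)$. This holds for every sufficiently small $\eps$, with $n$ depending on $\eps$.

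\emph{Global count in $K$ (first claim).} Assume $P$ has no zeros on $\partial K$. Then each zero $\beta_j$ of $P$ lies either in $\Int K$ or in the open set $\C\setminus K$. Choose $\eps>0$ so that:
\begin{itemize}
\item the disks $\overline{\D(\beta_j,\eps)}$ are pairwise disjoint and contained in $\D(0,r')$;
\item each such disk lies inside $\Int K$ when $\beta_j\in \Int K$;
\item each such disk is disjoint from $K$ when $\beta_j\notin K$.
\end{itemize}
This is possible because $K$ is closed and $\Int K$ is open. The set $K' = K\setminus\bigcup_j \D(\beta_j,\eps)$ is compact and contains no zero of $P$, so $\delta=\min_{K'}|P|>0$. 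For $n$ large, $\sup_{K'}|P_n-P|<\delta$, so $P_n$ has no zeros on $K'$. Every zero of $P_n$ in $K$ therefore lies in some $\D(\beta_j,\eps)$ with $\beta_j\in\Int K$. By the local count, that disk contains exactly as many zeros of $P_n$ as the multiplicity of $\beta_j$, once $n$ is large enough for all $j$ simultaneously (there are only finitely many $j$). Summing over $j$ shows that $P_n$ and $P$ have the same number of zeros in $K$, counted with multiplicity.

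\emph{Main obstacle.} The main point requiring care is the topological one in the global count: a general compact $K$ has no nice boundary curve, so Rouché cannot be applied on $\partial K$ directly. The localization above avoids this by combining small disks around the finitely many zeros with a uniform lower bound for $|P|$ on the compact remainder $K'$.
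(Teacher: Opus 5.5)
Your argument is correct, but it takes a different route from the paper, which simply invokes Hurwitz's theorem (citing Ahlfors) and gives no further argument.

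\textbf{What you do differently.} You reprove the needed statement directly from Rouch\'e's theorem, using the explicit rate $\sup_{|z|\le r}|P-P_n|\le r^{n+1}/(1-r)$. You also identify and handle the one point the citation glosses over. The textbook form of Hurwitz counts zeros inside a disk, or a region bounded by a nice contour. Here $K$ is an arbitrary compact set whose boundary need not be a contour. Your localization handles this: small Rouch\'e disks around the finitely many zeros of $P$ in $\Int K$, together with the bound $\min_{K'}|P|>0$ on the compact remainder $K'$. This is exactly what makes the first claim rigorous for general $K$.

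\textbf{What each approach buys.} The citation buys brevity. Your version is self-contained, and it gives the conclusion for all $n\ge n_0$, which is stronger than the ``there exists $n$'' in the second claim.

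\textbf{A small slip.} You require every disk $\overline{\D(\beta_j,\eps)}$ to lie in $\D(0,r')$. This is impossible if $P$ happens to vanish on the circle $|z|=r'$. There are two easy fixes:
\begin{itemize}
\item choose $r'$ so that $P$ has no zeros on $|z|=r'$; or
\item omit the disks around zeros outside $K$. They play no role, since $K'=K\setminus\bigcup_{\beta_j\in\Int K}\D(\beta_j,\eps)$ is already zero-free. For the disks around zeros in $\Int K$, take $\eps<r'-r$ and smaller than half the minimal separation of the finitely many zeros of $P$ in $\overline{\D(0,r')}$.
\end{itemize}
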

    \noindent
    The next lemma (based on Rouch\'e's theorem) is the first main step towards the proof of Theorem~\ref{thm:main_deter_result}.
    \begin{lemma}
        \label{lemma:Rouche_for_littlewood_poly_and_remainder}
        Let $\kappa>0$ and $K\subset \D$ compact. Suppose that $\alpha\in K$ is such that $P(\alpha) = 0$ and $|P^\prime(\alpha)| \ge \kappa$. Then there exists $r_0>0$ so that for all sufficiently large $n$ and for all $f\in \mathcal{L}_\infty$ the function
        \[
        G_{P_n,f}(z) = P_n(z) + z^{n+1} f(z) 
        \]
        has a unique root in $\D(\alpha, r_0)$. Furthermore, denoting this root by $\beta_{n,f}$, we have that 
        \begin{equation} \label{eq:extenstion_root_and_original_root_are_exponentially_close}
           \sup_{f\in \mathcal{L}_\infty} |\alpha - \beta_{n,f}| \le \big(|\alpha|e^{1-|\alpha|}\big)^{n+1} \, .
        \end{equation}
    \end{lemma}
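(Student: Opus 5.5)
The plan is to treat $G_{P_n,f}$ as a small perturbation of the full series $P$ and to apply Rouch\'e's theorem twice: once on a fixed circle around $\alpha$, which gives uniqueness, and once on an exponentially small circle, which gives \eqref{eq:extenstion_root_and_original_root_are_exponentially_close}.

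\emph{Step 1: rewrite $G_{P_n,f}$ as $P$ plus a small term.} Since $P_n$ is the $n$th prefix of $P$, we have $P(z)-P_n(z) = z^{n+1}Q_n(z)$ with $Q_n\in\mathcal{L}_\infty$. Hence
\[
G_{P_n,f}(z) = P(z) + z^{n+1}\big(f(z)-Q_n(z)\big),
\]
and for $|z|\le \rho<1$ the perturbation satisfies $|z^{n+1}(f-Q_n)(z)| \le 2\rho^{n+1}/(1-\rho)$, uniformly in $f\in\mathcal{L}_\infty$. Note also that $|\alpha|\ge 1/2$, because every root of a Littlewood series satisfies $|z|\ge 1/2$ by the triangle inequality. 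Fix $\delta>0$ so that the closed $2\delta$-neighbourhood $K'$ of $K$ lies in $\D$.

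\emph{Step 2: a lower bound for $|P|$ on circles around $\alpha$.} Every Littlewood series $g$ satisfies $|g''(z)|\le \sum_k k(k-1)|z|^{k-2} = 2(1-|z|)^{-3}$, which is bounded on $K'$ by a constant $M$ depending only on $K$. Taylor expansion at $\alpha$ then gives, for $|z-\alpha|=r\le\delta$,
\[
|P(z)| \ge \kappa r - \tfrac{M}{2} r^2 .
\]
Choose $r_0\le \min(\delta, \kappa/M)$; this $r_0$ depends only on $\kappa$ and $K$. Then $|P(z)|\ge \kappa r/2$ on every circle $|z-\alpha|=r$ with $r\le r_0$.

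\emph{Step 3: uniqueness in $\D(\alpha,r_0)$.} On the circle $|z-\alpha|=r_0$ we have $|z|\le\rho:=\max_{K'}|w|<1$. So the perturbation is at most $2\rho^{n+1}/(1-\rho)$, which is less than $\kappa r_0/2$ for all large $n$, uniformly in $f$. Rouch\'e's theorem then shows that $G_{P_n,f}$ has the same number of zeros in $\D(\alpha,r_0)$ as $P$ does. Since $P(z)\ne 0$ for $0<|z-\alpha|\le r_0$ and $\alpha$ is a simple zero, that number is exactly one. Call this zero $\beta_{n,f}$.

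\emph{Step 4: the exponential localization.} Set $r_n = (|\alpha|e^{1-|\alpha|})^{n+1}$. Note that $|\alpha|e^{1-|\alpha|}<1$ for $|\alpha|<1$, so $r_n\to0$ and eventually $r_n\le r_0$. On $|z-\alpha|=r_n$ we have $|z|\le|\alpha|+r_n$, so the perturbation is bounded by
\[
\frac{2|\alpha|^{n+1}(1+r_n/|\alpha|)^{n+1}}{1-|\alpha|-r_n} \le \frac{2|\alpha|^{n+1}e^{2(n+1)r_n}}{1-|\alpha|-r_n}.
\]
Here $e^{2(n+1)r_n}\to1$, since $(n+1)r_n\to0$. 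Meanwhile $|P|\ge \kappa r_n/2 = \tfrac{\kappa}{2}|\alpha|^{n+1}e^{(1-|\alpha|)(n+1)}$ on this circle. The ratio of the two bounds grows like $e^{(1-|\alpha|)(n+1)}$, and because $1-|\alpha|$ is bounded below on $K$ this ratio exceeds $1$ for all large $n$, uniformly in $f$. Rouch\'e's theorem applied again puts a zero of $G_{P_n,f}$ inside $\D(\alpha,r_n)$. By the uniqueness from Step 3 this zero is $\beta_{n,f}$, which gives \eqref{eq:extenstion_root_and_original_root_are_exponentially_close}.

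The main obstacle is the bookkeeping in Step 4. The radius $r_n$ has to be small enough that $(|\alpha|+r_n)^{n+1}$ is still comparable to $|\alpha|^{n+1}$, yet large enough that $\kappa r_n$ beats the perturbation bound $2|\alpha|^{n+1}/(1-|\alpha|)$. The factor $e^{(1-|\alpha|)(n+1)}$ supplies exactly this margin, and the uniform second-derivative bound on Littlewood series is what makes the constants independent of $f$ and of $n$.
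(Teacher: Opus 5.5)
Your proof is correct and is essentially the paper's argument. Both use a Taylor lower bound $|P(z)|\ge \kappa|z-\alpha|/2$ on small circles around $\alpha$, then Rouch\'e's theorem on the fixed circle of radius $r_0$ for uniqueness and on the circle of radius $(|\alpha|e^{1-|\alpha|})^{n+1}$ for localization; the paper only does these two steps in the reverse order, and your explicit perturbation $z^{n+1}(f-Q_n)$ with the tracked margin $e^{(1-|\alpha|)(n+1)}$ in Step 4 is, if anything, more careful than the paper's write-up.
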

    \begin{proof}
        We will start the proof by showing~\eqref{eq:extenstion_root_and_original_root_are_exponentially_close}, that is, that there exists a root of $G_{n,f}$ in $$\D\Big(\alpha, \big(|\alpha|e^{1-|\alpha|}\big)^{n+1}\Big)$$ for all $n$ large enough. Indeed, let $r_0 = C^{-1} \kappa$ for some sufficiently large $C>0$. For any $\rho\in(0,r_0)$ and for any $z = \alpha + \eta$ with $|\eta| = \rho$ we have that
        \begin{equation} \label{eq:lower_bound_on_P_z_via_taylor}
            |P(z)| \ge \rho |P^\prime(\alpha)| - \rho^2 \max_{z: \mathrm{dist}(z,K) \leq \rho} |P^{\prime\prime}(z)| \ge \rho \kappa - C^\prime\rho^2 \ge \frac{\rho \kappa}{2} \, .
        \end{equation}
        We also have that
        \begin{equation*}
            \max_{|z-\alpha| = \rho} \big|G_{P_n,f}(z) - P(z)\big| \le \max_{|z-\alpha| = \rho} |z|^{n+1} |f(z)| \le \big(|\alpha| e^{(|\alpha| - 1)/2}\big)^{n+1}
        \end{equation*}
        for all $n$ large enough. Combining this with~\eqref{eq:lower_bound_on_P_z_via_taylor} applied with $\rho = \big(|\alpha| e^{(|\alpha| - 1)}\big)^{n+1}$ we get that for all $n$ large
        \[
        \max_{|z-\alpha| = \rho} \big|G_{P_n,f}(z) - P(z)\big| \le \frac{\kappa}{2} \big(|\alpha| e^{(|\alpha| - 1)}\big)^{n+1} \le \min_{|z-\alpha| = \rho} |P(z)| \, .
        \]
        Hence, by Rouch\'e's theorem, $P$ and $G_{P_n,f}$ have the same number of roots in $\D\big(\alpha, \big(|\alpha|e^{1-|\alpha|}\big)^{n+1}\big)$. By assumption we have that $P(\alpha) = 0$, so in particular there is a root $\beta_{n,f}$ of $G_{P_n,f}$ which satisfies~\eqref{eq:extenstion_root_and_original_root_are_exponentially_close}. To deduce uniqueness, we note that~\eqref{eq:lower_bound_on_P_z_via_taylor} also shows that $P$ has a unique root in $\D(\alpha,r_0)$ (precisely at $\alpha$), and  we observe that
        \[
        \max_{|z-\alpha| = r_0} \big|G_{P_n,f}(z) - P(z)\big| \le C \big(|\alpha| +r_0\big)^{n+1} \le \frac{\kappa r_0}{2} \le \min_{|z-\alpha| = r_0} |P(z)| \, ,  
        \]
        so another application of Rouch\'e's theorem shows that $P$ and $G_{P_n,f}$ have the same number of roots in $\D(\alpha,r_0)$, which means that $G_{P_n,f}$ has a unique root there. 
    \end{proof}
    \noindent    
    Before we turn to prove Theorem~\ref{thm:main_deter_result}, we record a simple claim that will be useful throughout.
    \begin{claim} 
        \label{claim:distance_between_littlewood_roots}
        
        Let $\alpha\in K$ be such that $P(\alpha) = 0$ and $|P^\prime(\alpha)|\ge \kappa$. Then for all $n$ large there exists $\alpha_n$ with $P_n(\alpha_n) = 0$ such that
        \[
        |\alpha_n - \alpha| \le C \kappa^{-1} |\alpha|^{n+1}
        \]
        where $C = C(K)>0$ depends only on the compact set $K\subset \D$.
    \end{claim}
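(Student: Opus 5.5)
Since $P_n(z) = P(z) - z^{n+1} Q_n(z)$, where $Q_n(z) = \sum_{k\ge 0} \eps_{n+1+k} z^k$ is itself a Littlewood series, the polynomial $P_n$ is exactly of the form $G_{P_n,f}$ with $f = -Q_n \in \mathcal{L}_\infty$. So the plan is to rerun the Rouch\'e argument of Lemma~\ref{lemma:Rouche_for_littlewood_poly_and_remainder}, but on a circle whose radius is adapted to the sharper bound $C\kappa^{-1}|\alpha|^{n+1}$, comparing $P_n$ with $P$.

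First, fix $\delta = \delta(K) > 0$ so that $K_\delta = \{z : \mathrm{dist}(z,K)\le \delta\}$ is contained in $\D$, and set $M = \max_{K_\delta} |P''|$. By Cauchy estimates, $M$ is bounded in terms of $K$ alone, uniformly over $P\in\mathcal{L}_\infty$, since $|P(z)|\le (1-|z|)^{-1}$. As in~\eqref{eq:lower_bound_on_P_z_via_taylor}, for $|z-\alpha| = \rho \le \min(\delta, \kappa/(2M))$ we get
\[
|P(z)| \ge \rho\kappa - M\rho^2 \ge \rho\kappa/2 .
\]

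Second, on the same circle,
\[
|P_n(z) - P(z)| = |z|^{n+1}|Q_n(z)| \le \frac{(|\alpha|+\rho)^{n+1}}{1-|\alpha|-\rho}.
\]
Now choose $\rho = C\kappa^{-1}|\alpha|^{n+1}$, which tends to $0$ as $n \to \infty$ since $|\alpha| \le \max_K|z| < 1$. Note that we may assume $\alpha \ne 0$: indeed $|P(0)|=1$, so $P(0)\neq 0$. Then
\[
(|\alpha|+\rho)^{n+1} = |\alpha|^{n+1}\Bigl(1+\frac{\rho}{|\alpha|}\Bigr)^{n+1} \le |\alpha|^{n+1}\exp\bigl((n+1)\rho/|\alpha|\bigr) \le 2|\alpha|^{n+1}
\]
once $(n+1)\rho/|\alpha|$ is small. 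This holds for large $n$ because $\rho$ decays exponentially and $|\alpha| \ge 1/2$ (the triangle inequality gives $\mathcal{Z}_\infty\subset\{|z|\ge 1/2\}$). Hence
\[
|P_n - P| \le 2|\alpha|^{n+1}/(1-|\alpha|-\rho) \le C_1 |\alpha|^{n+1}
\]
with $C_1 = C_1(K)$. Taking $C = 4C_1$ gives
\[
\rho\kappa/2 = 2C_1|\alpha|^{n+1} > |P_n - P|
\]
on the circle. Rouch\'e's theorem then yields a root $\alpha_n$ of $P_n$ in $\D(\alpha,\rho)$, which is the claim.

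The only delicate point is making the threshold for ``$n$ large'' and the constant $C$ depend only on $K$ and $\kappa$, not on the particular $P$. Both the bound on $P''$ and the bound on $Q_n$ are uniform over $\mathcal{L}_\infty$ via $|g(z)|\le(1-|z|)^{-1}$. The step replacing $(|\alpha|+\rho)^{n+1}$ by $O(|\alpha|^{n+1})$ needs $n\rho \to 0$, which holds for $n$ beyond a threshold depending on $K$ and $\kappa$. The constant $C$ itself depends only on $K$, as required.
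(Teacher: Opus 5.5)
Your proof is correct, but it takes a different route from the paper.

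\textbf{The paper's route.} It first invokes Hurwitz (Fact~\ref{fact:almost_sure_convergence_of_roots}) to get some root $\alpha_n\to\alpha$ of $P_n$, and only then quantifies the distance. It Taylor expands $P_n$ at its own root $\alpha_n$ and uses $|P_n'(\alpha_n)|\approx|P'(\alpha)|\ge\kappa$ to get $|P_n(\alpha)|\ge\frac{\kappa}{2}|\alpha-\alpha_n|$. It then compares this with $|P_n(\alpha)|=|P_n(\alpha)-P(\alpha)|\le C|\alpha|^{n+1}$.

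\textbf{Your route.} You locate the root directly, rerunning the Rouch\'e argument of Lemma~\ref{lemma:Rouche_for_littlewood_poly_and_remainder} on a circle of radius $\rho\asymp\kappa^{-1}|\alpha|^{n+1}$. You expand $P$ (not $P_n$) at $\alpha$.

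\textbf{What each buys.} Your version gives existence and the quantitative bound in one step. It needs neither Hurwitz nor the convergence $P_n'(\alpha_n)\to P'(\alpha)$. It also makes explicit that the threshold for ``$n$ large'' depends only on $K$ and $\kappa$, which is the uniformity behind the Remark after Theorem~\ref{thm:main_deter_result}; the paper's qualitative Hurwitz step leaves this implicit. The paper's version, by contrast, bounds the distance for whichever root Hurwitz supplies. That is how the claim is used later: the roots $\alpha_n\to\alpha$ ``must have'' $\alpha_n\in\D(\alpha,C\kappa^{-1}|\alpha|^{n+1})$. To get that from your approach, add a second Rouch\'e on a circle of fixed small radius, as in the uniqueness part of Lemma~\ref{lemma:Rouche_for_littlewood_poly_and_remainder}. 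This shows $P_n$ has exactly one root in $\D(\alpha,r)$, which must then be the one you found in $\D(\alpha,\rho)$.

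\textbf{Two small remarks.}
\begin{itemize}
\item Your opening sentence is inaccurate: $G_{P_n,-Q_n}=P_n-z^{n+1}Q_n=2P_n-P$, not $P_n$. What you actually use, correctly, is $P_n=P+z^{n+1}(-Q_n)$, so this slip is harmless.
\item You do not need $|\alpha|\ge 1/2$. Since $(n+1)\rho/|\alpha|=(n+1)C\kappa^{-1}|\alpha|^{n}\le (n+1)C\kappa^{-1}(\max_K|z|)^{n}$, this tends to $0$ directly.
\end{itemize}
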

    \begin{proof}
        Existence of a root $\alpha_n$ of $P_n$ in a small enough disk around $\alpha$ is guaranteed in view of Fact~\ref{fact:almost_sure_convergence_of_roots}. To estimate $|\alpha-\alpha_n|$, we Taylor expand and see that
        \[
        |P_n(\alpha)| \ge \big|P_n(\alpha_n) + (\alpha-\alpha_n)P_n^\prime(\alpha_n)\big| - \max_{z\in K} |P_n^{\prime\prime}(z)| |\alpha- \alpha_n|^2 \ge \frac{\kappa}{2} \, |\alpha - \alpha_n|
        \]
        for all $n$ large enough. On the other hand, we also have that
        \[
        |P_n(\alpha)| = |P_n(\alpha) - P(\alpha)| \le |\alpha|^{n+1} \sum_{k\ge 0} |\alpha|^k \le C\, |\alpha|^{n+1} \, ,
        \]
        and the claim follows. 
    \end{proof} 
    \begin{proof}[Proof of Theorem~\ref{thm:main_deter_result}]
        Recall that $R_{n,{\tt loc}} = R(P_n) \cap \D(\alpha,r_0)$ are the roots of all possible extensions $\{G_{P_n,f}\}_{f\in \mathcal{L}_\infty}$ which lie in a small neighborhood of $\alpha$. By Lemma~\ref{lemma:Rouche_for_littlewood_poly_and_remainder}, this set consists precisely of the roots $\beta_{n,f}$, which are the (unique) roots of $G_{P_n,f}$ which satisfy~\eqref{eq:extenstion_root_and_original_root_are_exponentially_close}. 
        We will show that
        \begin{equation} \label{eq:uniform_limit_of_scaled_points_is_dragon}
            \lim_{n\to\infty} \sup_{f\in \mathcal{L}_\infty} \Big| T_{n,\alpha_n}(\beta_{n,f}) + \frac{f(\alpha)}{P^\prime(\alpha)}\Big|  = 0
        \end{equation}
        which in turn will imply that $\displaystyle \lim_{n\to\infty} \dH\big(T_{n,\alpha_n}(R_{n,{\tt loc}}), \, \mathcal{D_\alpha}\big) = 0$. Indeed, letting $$\eps_n = \sup_{f\in \mathcal{L}_\infty} \Big| T_{n,\alpha_n}(\beta_{n,f}) + \frac{f(\alpha)}{P^\prime(\alpha)}\Big| \, ,$$
        we get that for an arbitrary point $P^\prime(\alpha)^{-1}f(\alpha)\in \mathcal{D}_\alpha$ there exists the root $\beta_{n,-f}\in R_{n,{\tt loc}}$ so that
        \[
        \text{dist}\big(P^\prime(\alpha)^{-1}f(\alpha), T_{n,\alpha_n}(R_{n , \tt loc})\big) \le \big|P^\prime(\alpha)^{-1}f(\alpha) - T_{n,\alpha_n}(\beta_{n,-f})\big| \le \eps_n \, .
        \]
        Furthermore, since any point in $R_{n,\tt loc}$ is a root $\beta_{n,f}$ for some $f\in \mathcal{L}_\infty$, similar reasoning shows that
        \[
        \text{dist}\big(T_{n,\alpha_n}(\beta_{n,f}), \mathcal{D}_\alpha\big) \le \eps_n \, .
        \]
        Altogether, we get $\dH(T_{n,\alpha_n}(R_{n,{\tt loc}}), \mathcal{D}_\alpha) \le \eps_n$, so the theorem will follow once~\eqref{eq:uniform_limit_of_scaled_points_is_dragon} is established. 

        For each $f\in \mathcal{L}_\infty$, the relation $G_{P_n,f}(\beta_{n,f}) = 0$ implies that
        \begin{equation} \label{eq:beta_n_being_a_root_meaning}
            P_n(\beta_{n,f}) = -\beta_{n,f}^{n+1} f(\beta_{n,f}) \, .
        \end{equation}
        By~\eqref{eq:extenstion_root_and_original_root_are_exponentially_close} and writing $\gamma = (|\alpha| e^{1-|\alpha|})^{n+1}$ we have that
        \[
        \beta_{n,f}^{n+1} = \alpha^{n+1}\big(1+O(n \gamma /|\alpha|)\big) = \alpha^{n+1}(1 + o(1))
        \]
        as $n\to \infty$, where the $o(1)$ term does not depend on $f\in \mathcal{L}_\infty$. Furthermore, we have that
        \[
        |f(\beta_{n,f}) - f(\alpha)| \le |\alpha - \beta_{n,f}| \cdot \max_{z\in K}|f^\prime(z)| \le C_K |\alpha-\beta_{n,f}| \xrightarrow{n\to \infty} 0 
        \]
        uniformly in $f$. Therefore, as $n\to \infty$, we may rewrite the right-hand side of \eqref{eq:beta_n_being_a_root_meaning} as \begin{equation}\label{eq:beta-in-terms-of-alpha}
            \beta_{n,f}^{n+1} f(\beta_{n,f}) = \alpha^{n+1} f(\alpha) + o(|\alpha|^{n+1})\,.
        \end{equation}
        On the other hand, by Claim~\ref{claim:distance_between_littlewood_roots} the sequence $\{\alpha_n\}$ with $P_n(\alpha_n) = 0$ must have $\alpha_n\in \D\big(\alpha, C \kappa^{-1} |\alpha|^{n+1}\big)$. Taylor expanding the left-hand side of~\eqref{eq:beta_n_being_a_root_meaning}, we get that
        \begin{align*}
            P_n (\beta_{n,f}) &= P_n(\alpha_n) + (\beta_{n,f} - \alpha_n) P_n^\prime(\alpha_n) + O\big(|\beta_{n,f} - \alpha_n|^2\big) \\ &= (\beta_{n,f} - \alpha_n)P_n^\prime(\alpha_n)\big(1 + o(1) \big)  = (\beta_{n,f} - \alpha_n)P^\prime(\alpha)\big(1 + o(1) \big) \, ,
        \end{align*}
        where the $o(1)$ term depends only on $K$ and $\kappa$. Plugging back into~\eqref{eq:beta_n_being_a_root_meaning} and rearranging gives
        \[
        T_{n,\alpha_n}(\beta_{n,f}) = -\frac{\beta_{n,f}-\alpha_n}{\alpha_n^{n+1}} = -\Big(\frac{\alpha}{\alpha_n}\Big)^{n+1} \frac{f(\alpha)}{P^\prime(\alpha)} + o(1)  = -\frac{f(\alpha)}{P^\prime(\alpha)} + o(1) \, ,
        \]
        with the $o(1)$ term being uniform in $f\in \mathcal{L}_\infty$ and in the last equality we used Claim~\ref{claim:distance_between_littlewood_roots}. This proves the limit~\eqref{eq:uniform_limit_of_scaled_points_is_dragon}, completing the proof.  
        \end{proof}
    
    \section{Almost sure no double zeros}
        \label{sec:proof_of_no_double_roots}
    \noindent
    Recall that $F(z) = \sum_{k=0}^\infty \eps_k z^k$ with $(\eps_k)$ an i.i.d.\ sequence of random signs. In this section we prove Theorem~\ref{thm:no_double_roots_almost_surely}, which states that almost surely $F$ has no double roots in the unit disk $\D$. 

    Fix some $r\in(\tfrac{1}{2},1)$, and recall that $r\D = \{ |z| \le r \}$. It is not hard to show (see Claim~\ref{claim:Jensen_bound_on_number_of_roots} below) that any power series with coefficients in $\{-1,+1\}$ has at most $C(r)<\infty$ roots (including multiplicities) in the disk $r\D$. For $j\ge 1$ and $\gamma>0$ we define the events
    \begin{equation}
        \label{def:event_E_r,j,gamma}
        \mathcal{E}_{r,j}(\gamma) = \Big\{ \exists z\in r\D \, : \, F(z) = F^\prime(z) = \ldots = F^{(j)}(z) = 0\, , \ \text{and} \ |F^{(j+1)}(z)| > \gamma  \Big\} \, .
    \end{equation}
    \begin{lemma}
        \label{lemma:E_r,j,gamma_has_prob_zero}
        For all $r<1$, $j\ge 1$ and $\gamma>0$ we have $\P\big(\mathcal{E}_{r,j}(\gamma)\big) = 0$.
    \end{lemma}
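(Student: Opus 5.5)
Our plan is a first-moment / covering argument in which we use the independence of high-degree coefficients to make a $(j+1)$-fold root unlikely. Fix $r<1$, $j\ge 1$, $\gamma>0$, and a slightly larger radius $r<r'<1$. Split $F = F_n + H_n$ with $H_n(z)=\sum_{k>n}\eps_k z^k$. We show $\P(\mathcal{E}_{r,j}(\gamma))\le C\,\delta_n$ with $\delta_n\to0$ as $n\to\infty$.

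\emph{Step 1 (localization).} On $\mathcal{E}_{r,j}(\gamma)$ there is $z_0\in r\D$ with $F(z_0)=\dots=F^{(j)}(z_0)=0$ and $|F^{(j+1)}(z_0)|>\gamma$. All derivatives of $F$ are bounded on $r'\D$ by a constant $C(r)$, uniformly over $\mathcal{L}_\infty$. Taylor expansion therefore gives, for $|z-z_0|\le \rho$ with $\rho$ small in terms of $\gamma$,
\[
|F(z)|\le C\rho^{j+1},\qquad |F^{(j+1)}(z)|\ge \gamma/2 .
\]
Take a grid $\Lambda_\rho\subset r\D$ of mesh $\rho$ with $|\Lambda_\rho|\le C\rho^{-2}$. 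Then $\mathcal{E}_{r,j}(\gamma)$ implies that some $w\in\Lambda_\rho$ has $|F(w)|\le C\rho^{j+1}$. Since $j\ge1$ this gives $|F(w)|\le C\rho^{2}$, and we also get $|F'(w)|\le C\rho^{j}\le C\rho$.

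\emph{Step 2 (small ball estimate at a fixed point).} The problem is to bound $\P(|F(w)|\le C\rho^{2})$ by $o(\rho^{2})$ uniformly in $w\in r\D$. A single-point small ball bound alone is insufficient because the values of $F$ are fractal-distributed (the dragon set $D_w$), and for $|w|$ small the law of $F(w)$ is singular. To get around this we choose $n$ with $r^{n}\approx\rho^{1/2}$, condition on $F_n$, and use only the tail $H_n(w)=w^{n+1}F^{(n)}_{\rm shift}(w)$. This tail ranges over $w^{n+1}D_w$, which is a set of diameter $\asymp |w|^{n+1}$.

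We then use the joint constraint: both $|F(w)|$ and $|F'(w)|$ are small. This forces $F_n(w)$ to be within $C\rho$ of $-H_n(w)$, and similarly for the derivative. We bound this by a two-dimensional count: the pair $(F(w),F'(w))$ takes $2^{m}$ equally likely values after conditioning on all coefficients outside a block of $m$ consecutive indices. The block is chosen where $|w|^k$ is comparable to $\rho$, and the Vandermonde-type nondegeneracy of the vectors $(w^k,kw^{k-1})$ for $k$ in the block yields anticoncentration of order $\rho^{2+c}$ with some $c>0$.

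\emph{Step 3 (union bound).} Summing over the grid gives
\[
\P(\mathcal{E}_{r,j}(\gamma))\le C\rho^{-2}\rho^{2+c}\to0 .
\]
Since $\rho$ can be taken arbitrarily small, the probability is $0$.

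The main obstacle is Step 2: obtaining an anticoncentration bound for $(F(w),F'(w))$ that beats $\rho^{2}$ by a power, uniformly in $w\in r\D$, including small $|w|$ where $D_w$ is a Cantor set. If the $\rho^{2+c}$ bound fails for small $|w|$, the fallback is to use $j\ge 1$ more fully and exploit $|F|\le C\rho^{j+1}$, $|F'|\le C\rho^j$ together. For $j\ge2$ this gives room. For $j=1$ one has a Cantor set of dimension $\log2/\log(1/|w|)$, and the fallback is to work directly with the self-similar structure through a Littlewood–Offord-type lemma for the block $\{\eps_k\}$.
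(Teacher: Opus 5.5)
\paragraph{Gap: Step 2.} Step 2 is the whole content of the lemma, and the mechanism you sketch cannot deliver it. Your union bound runs over a deterministic grid of $\asymp\rho^{-2}$ points, so you need
\[
\sup_{w\in r\D}\P\big(|F(w)|\le C\rho^2,\ |F'(w)|\le C\rho\big)\le\rho^{2+c},
\]
a small-ball estimate that is polynomial in $\rho$ and uniform in $w$. Linear (Vandermonde-type) nondegeneracy of the vectors $(w^k,kw^{k-1})$ over a block of $m$ signs only yields Littlewood--Offord/Hal\'asz-type bounds that decay polynomially in $m$. Only $m\lesssim\log(1/\rho)/\log(1/|w|)$ coefficients exceed the window $\rho^2$, and only $O(1)$ of them are genuinely ``comparable to $\rho$''. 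So this route gives a bound polylogarithmic in $\rho$, which is useless against $\rho^{-2}$ grid points.

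An exponential-in-$m$ bound would require the $2^m$ block sums to be separated at relative precision about $\rho$. That is exactly what fails at parameters with exact overlaps. For example, at $w=(\sqrt5-1)/2$ one has $w^k=w^{k+1}+w^{k+2}$, and the prefix values $P_n(w)$ collapse to $O(\phi^n)$ distinct, $\asymp w^n$-separated points. With $w^n\approx\rho^2$, counting in the first coordinate then gives at best order $\rho^2$. Any gain would have to come from a quantitative decoupling of $F'(w)$ from $F(w)$, which you do not supply. Uniform-in-$w$ estimates of this kind are hard questions about self-affine measures with overlaps, and your ``fallback'' does not address them. Note also that your argument never uses $|F^{(j+1)}(z_0)|>\gamma$, which is precisely the hypothesis that makes the lemma tractable.

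\paragraph{Missing idea.} Do not union-bound over a deterministic grid. Instead, union-bound over a bounded, $\mathcal{F}_m$-measurable set of candidate points, where $\mathcal{F}_m=\sigma(\eps_0,\dots,\eps_m)$. The paper proceeds as follows.
\begin{enumerate}
\item From $F^{(j)}(z_0)=0$, $|F^{(j+1)}(z_0)|>\gamma$ and Rouch\'e, it produces a zero $w$ of $P_m^{(j)}$ within $t\approx\gamma^{-1}m^j|z_0|^m$ of $z_0$, up to $e^{O(1-r)}$-type factors.
\item Since $F^{(j-1)}$ vanishes to second order at $z_0$, one gets $|F^{(j-1)}(w)|\lesssim t^2$. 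This is roughly a factor $|w|^m$ smaller than the natural scale $|w|^m$ of the tail $T_m^{(j-1)}(w)$, which is independent of $\mathcal{F}_m$.
\item By Jensen's formula, $P_m^{(j)}$ has at most $C(r,j)$ zeros in the relevant disk.
\item Conditioning on $\mathcal{F}_m$ and applying Erd\H{o}s's $C/\sqrt N$ bound with $N\asymp m$ to $T_m^{(j-1)}(w)$ gives $\P(\mathcal{E}_{r,j}(\gamma))\le C(r,j)/\sqrt m\to0$.
\end{enumerate}
Because the number of candidate locations is bounded, any $o(1)$ anticoncentration suffices, and no polynomial-in-$\rho$ small-ball estimate is ever needed.
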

    
    \noindent
    Assuming Lemma~\ref{lemma:E_r,j,gamma_has_prob_zero} for a moment, it is quite simple to complete the proof of Theorem~\ref{thm:no_double_roots_almost_surely}.
    
    \begin{proof}[Proof of Theorem~\ref{thm:no_double_roots_almost_surely}]
        Since the events $\mathcal{E}_{r,j}(\gamma)$ are monotonically increasing as $\gamma\downarrow 0$, Lemma~\ref{lemma:E_r,j,gamma_has_prob_zero} implies 
        \begin{equation} \label{eq:E_r,j,0_has_zero_prob}
            \P\big(\mathcal{E}_{r,j}(0)\big) = \P \Big( \exists z\in r\D \, : \, F(z) = F^\prime(z) = \ldots = F^{(j)}(z) = 0\, , \ \text{and} \ |F^{(j+1)}(z)| > 0 \Big) = 0 \, .
        \end{equation}
        The union bound shows that 
        \begin{align*}
        \P\Big(\exists z\in r\D \, : \, F(z) = F^\prime(z) = 0 \Big) \le  \P\big(\mathcal{E}_{r,1}(0)\big)+ \P\Big(\exists z\in r\D \, : \, F(z) = F^\prime(z) = F^{\prime\prime}(z) = 0 \Big) \, .
        \end{align*}
        Iterating the above $C(r)$ times, we get that 
        \begin{align*}
            \P\Big(\exists z\in r\D \, : \, F(z) = F^\prime(z) = 0 \Big) \le \sum_{j=1}^{C(r)} \P\big(\mathcal{E}_{r,j}(0) \big) + \P\Big(\exists z\in r\D \, : \, F(z)  = \ldots = F^{(C(r)+1)}(z) = 0 \Big) \, .
        \end{align*}
        Combining~\eqref{eq:E_r,j,0_has_zero_prob} and Claim~\ref{claim:Jensen_bound_on_number_of_roots}, we see that all terms on the right-hand side vanish, and hence
        \[
        \P\Big(\exists z\in r\D \, : \, F(z) = F^\prime(z) = 0 \Big) = 0 \, .
        \]
        Finally, by taking a countable union of radii with $r\uparrow 1$, we conclude that 
        \[
        \P\Big(\exists z\in \D \, : \, F(z) = F^\prime(z) = 0 \Big) = 0
        \]
        which is what we wanted to show. 
    \end{proof}
    \noindent
    We devote the remaining portion of the section to proving Lemma~\ref{lemma:E_r,j,gamma_has_prob_zero}. For fixed $j\ge 1$ we note that 
    \begin{equation} \label{eq:formula_for_j_derivative_of_F}
        F^{(j)}(z) =  \sum_{k=j}^\infty \eps_k (k)_j z^{k-j} \, ,
    \end{equation}
    where $(k)_j = j!\binom{k}{j}$ is the falling factorial. 
    \begin{claim}
    \label{claim:Jensen_bound_on_number_of_roots}
        For all $j\ge 0$, $m\in\mathbb{N}\cup {\infty}$ and every choice of signs $h_k\in\{-1,+1\}$ the number of roots (including multiplicities) in $r\D$ of $$h(z) = \sum_{k=j}^m h_k  (k)_j \,  z^{k-j}$$ is at most $C(r,j)<\infty$. 
    \end{claim}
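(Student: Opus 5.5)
We will use Jensen's formula on a slightly larger disk, since it bounds the number of zeros in $r\D$ by the ratio of the maximum modulus on a bigger circle to the value at the center. The difficulty is that the value at the center may be small (or zero), so we first normalize by dividing out the first nonzero coefficient. Concretely, fix $\rho = (1+r)/2 \in (r,1)$. The lowest coefficient of $h$ is $h_j (j)_j = \pm j!$, which is nonzero, so $h(0) = \pm j!$ and $|h(0)| = j! \ge 1$. Thus no normalization is needed: the constant term is always bounded away from zero.

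Next we bound the maximum modulus on $|z| = \rho$ uniformly in the signs and in $m$:
\[
\max_{|z|=\rho} |h(z)| \le \sum_{k\ge j} (k)_j \rho^{k-j} = \frac{j!}{(1-\rho)^{j+1}} =: M(\rho,j) < \infty \, ,
\]
using the identity $\sum_{k\ge j}\binom{k}{j} x^{k-j} = (1-x)^{-j-1}$. This holds equally for polynomials ($m<\infty$) and for series ($m=\infty$), where $h$ is analytic in $\D$.

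Then we apply Jensen's formula: if $N$ denotes the number of zeros of $h$ in $r\D$ with multiplicity, then
\[
N \log\frac{\rho}{r} \le \sum_{|a|<\rho,\, h(a)=0} \log\frac{\rho}{|a|} = \frac{1}{2\pi}\int_0^{2\pi} \log|h(\rho e^{i\theta})| \, d\theta - \log|h(0)| \le \log M(\rho,j) - \log j! \, .
\]
Consequently $N \le (j+1)\log\frac{1}{1-\rho} \big/ \log\frac{\rho}{r} =: C(r,j)$, which is independent of the signs and of $m$. There is no real obstacle here; the only point to check is that $h(0)\neq 0$, which is guaranteed because all coefficients are $\pm$ a nonzero falling factorial, and in particular the $k=j$ term equals $\pm j!$.
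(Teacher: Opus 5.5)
Your proposal is correct and follows the paper's proof: Jensen's formula on the circle of radius $(1+r)/2$, using $|h(0)| = j! \ge 1$ together with a bound on $\max_{|z|=\rho}|h(z)|$ that is uniform in the signs and in $m$. The only difference is that you evaluate $\sum_{k\ge j}(k)_j\rho^{k-j} = j!\,(1-\rho)^{-j-1}$ exactly, where the paper uses the cruder bound $\sum_{k\ge j} k^j R^{k-j}$, which yields a slightly cleaner explicit constant $C(r,j)$.
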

    \noindent
    \begin{proof}[Proof of Claim~\ref{claim:Jensen_bound_on_number_of_roots}]
        Recall Jensen's formula~\cite[Chapter~5]{Ahlfors} for analytic functions. For all $R<1$ we have
        \[
        \int_{0}^{2\pi} \log|h(Re^{i\theta})| \, \frac{{\rm d}\theta}{2\pi} - \log|h(0)| = \sum_{\substack{|\alpha|<R \\ h(\alpha) = 0}} \log\Big(\frac{R}{|\alpha|}\Big) \, .
        \]
        Applying the formula with $R = r+\tfrac{1}{2}(1-r)<1$ while using the fact that $\log|h(0)| = \log |j!| \geq 0$ gives
        \begin{align*}
            \#\big\{\alpha\in r\D \, : \, h(\alpha)= 0 \big\} & \le \Big(\log\Big(\frac{R}{r}\Big)\Big)^{-1} \sum_{\substack{|\alpha|<r \\ h(\alpha) = 0}} \log\Big(\frac{R}{|\alpha|}\Big) \\ &  \le  \Big(\log\Big(\frac{R}{r}\Big)\Big)^{-1} \int_{0}^{2\pi} \log|h(Re^{i\theta})| \, \frac{{\rm d}\theta}{2\pi} \le \Big(\log\Big(\frac{R}{r}\Big)\Big)^{-1} \log\Big|\sum_{k=j}^\infty k^j R^{k-j}\Big|  <\infty \, . \qedhere 
        \end{align*}
    \end{proof}
    \noindent
    For $m\ge 1$ we set 
    \begin{equation*}
        P_m(z) = \sum_{k=0}^m \eps_k z^k \, , \qquad T_m(z) = F(z) - P_m(z) \, .
    \end{equation*}
    The next lemma is the key step in our analysis. Roughly speaking, it says that if $F$ has a double root at $z_0\in r\D$ which is not a triple root, then for $m$ large there will be a nearby critical point of $P_m$ on which $F$ is atypically small. 
    \begin{lemma}
        \label{lemma:existence_of_bad_critical_point}
         Let $r\in(\tfrac{1}{2},1)$, $j\ge 1$ and $\gamma>0$ be fixed and let $m\ge m_0(j,r,\gamma)$ be large enough. On the event $\mathcal{E}_{r,j}(\gamma)$ there exists $w\in (e^{1-r}r)\D$ such that 
         \begin{equation*}
             P_m^{(j)}(w) = 0 \qquad \text{and} \qquad |F^{(j-1)}(w)| \le C m^{2j}\big(e^{2(1-r)}|w|\big)^{2m} \, ,
         \end{equation*} 
         for some $C = C(r,j,\gamma)>0$. 
    \end{lemma}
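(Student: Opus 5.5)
On the event $\mathcal{E}_{r,j}(\gamma)$, fix $z_0\in r\D$ with $F(z_0)=\dots=F^{(j)}(z_0)=0$ and $|F^{(j+1)}(z_0)|>\gamma$. The idea is to view $P_m^{(j)}$ as a small perturbation of $F^{(j)}$ near $z_0$. The function $F^{(j)}$ has a simple zero at $z_0$, so $P_m^{(j)}$ should have a nearby zero $w$. At that point, $F^{(j-1)}$ vanishes to second order in the distance $|w-z_0|$, which is exponentially small.

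\textbf{Step 1: tail bounds.} By \eqref{eq:formula_for_j_derivative_of_F}, for any $i\le j+1$ and $|z|\le \rho<1$ we have
\[
|T_m^{(i)}(z)|\le \sum_{k>m} k^{i}\rho^{k-i}\le C m^{i}\rho^{m}.
\]

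\textbf{Step 2: locate $w$ by Rouch\'e.} Choose $\delta=\delta_m\to 0$, with $\delta$ of order $m^{j}\rho^m/\gamma$, where $\rho=|z_0|+o(1)$. On $|z-z_0|=\delta$, Taylor's expansion of $F^{(j)}$ gives $|F^{(j)}(z)|\ge \gamma\delta-C\delta^2\ge\gamma\delta/2$. Here the bound on $F^{(j+2)}$ on $r'\D$, with $r'=(1+r)/2$, is uniform over Littlewood series, as in Claim~\ref{claim:Jensen_bound_on_number_of_roots}. On the same circle, Step 1 gives $|T_m^{(j)}(z)|\le Cm^j(|z_0|+\delta)^m<\gamma\delta/2$ for $m\ge m_0(j,r,\gamma)$. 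Rouch\'e's theorem therefore produces $w$ with $P_m^{(j)}(w)=0$ and
\[
|w-z_0|\le \delta\le C m^{j}(|z_0|+\delta)^m .
\]

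\textbf{Step 3: convert $|z_0|$ into $|w|$.} I expect this to be the fiddliest part. The target is stated in terms of $|w|$, and $|z_0|\le |w|+\delta$ with $\delta$ exponentially small relative to $|z_0|$. The point $z_0$ cannot be near $0$, since $|F(z)|\ge 1-\sum_{k\ge1}|z|^k>0$ for $|z|<1/2$, so $|z_0|\ge 1/2$. Hence
\[
(|z_0|+\delta)^m\le |w|^m(1+4\delta)^m\le C'|w|^m\le C(e^{1-r}|w|)^m .
\]
The factor $e^{1-r}$ gives generous slack. I would pick $\delta$ slightly larger, say $m^j(e^{1-r}|z_0|)^m$, so that the Rouch\'e inequality holds cleanly; the same slack yields $|w|\le r+\delta\le e^{1-r}r$.

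\textbf{Step 4: bound $F^{(j-1)}(w)$.} Since $F^{(j-1)}(z_0)=F^{(j)}(z_0)=0$, Taylor's theorem gives
\[
|F^{(j-1)}(w)|\le C|w-z_0|^2\le C m^{2j}\big(e^{2(1-r)}|w|\big)^{2m}
\]
after absorbing constants, which is the stated bound. The only delicate bookkeeping is in Step 3: all constants must depend only on $(r,j,\gamma)$, and the uniform derivative bounds on $r'\D$ hold for all $\pm1$ coefficient sequences, so this is fine.
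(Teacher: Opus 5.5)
Your proposal is correct and follows the same route as the paper. You apply Rouch\'e's theorem to $P_m^{(j)} = F^{(j)} - T_m^{(j)}$ versus $F^{(j)}$ on an exponentially small circle (radius of order $\gamma^{-1}m^j$ times an $m$-th power) around the simple zero $z_0$ of $F^{(j)}$, then bound $F^{(j-1)}(w)$ by a second-order Taylor estimate using $F^{(j-1)}(z_0)=F^{(j)}(z_0)=0$; your explicit use of $|z_0|\ge 1/2$ to convert powers of $|z_0|$ into powers of $|w|$ is a step the paper leaves implicit.
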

    \begin{proof}
        Let $z_0\in r\D$ be a point such that
        \begin{equation*}
            F^{(j-1)}(z_0) = F^{(j)}(z_0) = 0 \, , \qquad \text{and} \qquad |F^{(j+1)}(z_0)| > \gamma \, . 
        \end{equation*}
        On the event $\mathcal{E}_{r,j}(\gamma)$, we know that such a point exists. We would like to apply Rouch\'e's theorem to get a root of $P_m^{(j)}$ in a disk $\D(z_0,t)$ for some suitably chosen small $t>0$. Indeed, since $|\eps_k|\le 1$ for all $k\ge 0$, we have that
        \[
        \max_{\ell\le j+2} \max_{|z|\le r} |F^{(\ell)}(z)| \le \max_{\ell\le j+2} \sum_{k= \ell}^\infty k^\ell r^{k-\ell}  < \infty.
        \]
        At a point $z = z_0 + \eta$ with $|\eta| = t$, a Taylor expansion shows that
        \begin{equation} \label{eq:proof_of_lemma_exists_bad_critical_points_lower_bound_via_taylor}
            |F^{(j)}(z)| \ge |F^{(j)}(z_0) + F^{(j+1)}(z_0) \, \eta| - t^2 \max_{|z|\le e^{r-1}r} |F^{(j+2)}(z)| > t\gamma - t^2 C(r,j) \, .
        \end{equation}
        On the other hand, by taking $$t = \gamma^{-1} C^\prime m^j (e^{r-1}|z_0|)^m \, ,$$
        for sufficiently large $C^\prime = C^\prime(r,j)$ we get that for all $z= z_0 + \eta$
        \[
        |T_m^{(j)}(z)| \le \sum_{k=m+1}^\infty k^j (|z_0|+t)^{k-j} \le C (r,j) \, m^j (e^{r-1}|z_0|)^m < |F^{(j)}(z)| \, ,
        \]
        where the last inequality holds by~\eqref{eq:proof_of_lemma_exists_bad_critical_points_lower_bound_via_taylor} for all $m$ sufficiently large. Rouch\'e's theorem now guarantees a root $w\in \D(z_0,t)$ of $P_m^{(j)} = F^{(j)} - T_m^{(j)}$, since $F^{(j)}(z_0) = 0$. We note that $$\big| |w|- |z_0|\big|\le t \le |z_0|e^{r-1}$$ for all $m$ sufficiently large. Finally, since $F^{(j-1)}(z_0) = F^{(j)}(z_0) = 0$, another Taylor approximation shows that
        \[
        |F^{(j-1)}(w)| \le t^2 \max_{|z|\le re^{r-1}} |F^{(j+1)}(z)| \le C(r,j,\gamma) m^{2j} (e^{r-1}|z_0|)^{2m} \le C(r,j,\gamma) m^{2j} (e^{2(r-1)}|w|)^{2m}   \, , 
        \]
        as desired.
    \end{proof}
    \begin{proof}[Proof of Lemma~\ref{lemma:E_r,j,gamma_has_prob_zero}]
        To prove the lemma, we will make use of the classical solution of Erd\H{o}s~\cite{Erdos_small_ball_1945} to the Littlewood-Offord problem.
        For all $N\ge1$ and for any $c_1,\ldots, c_N\in \C$ with $|c_k|\ge 1$ we have
        \begin{equation}
            \label{eq:small_ball_bound}
            \sup_{\zeta \in \C} \, \P\Big( \Big| \sum_{k=1}^N\eps_k c_k - \zeta\Big| \le 1\Big) \le \frac{C}{\sqrt{N}} \, ,
        \end{equation}
        for some absolute constant $C>1$. We note that even weaker bounds, such as the one originally obtained by Littlewood and Offord~\cite{Littlewood_Offord} (with an extra $\log(N)$ factor on the right-hand side of~\eqref{eq:small_ball_bound}), would also suffice for our purpose. 
        From~\eqref{eq:small_ball_bound} we can deduce that for all $m\ge 1$, $\delta\in(0,1)$ and for all $z\in \D \setminus \{0\}$ we have 
        \begin{equation} \label{eq:anti_concentration_for_T_m_j}
            \max_{\zeta\in \C} \, \P\Big(\big|T_m^{(j)}(z) - \zeta \big| \le \delta |z|^{m} \Big) \le C  \bigg(\frac{\log(1/|z|)}{\log(1/\delta)}\bigg)^{1/2}. 
        \end{equation}
        for some absolute constant $C>1$. 
        Indeed, let $N = \lfloor \tfrac{1}{2} \log(1/\delta)/\log(1/|z|) \rfloor $ and observe that for $k\in \{m+1,\ldots, m+N\}$ we have 
        \begin{equation*}
            \delta^{-1} (k)_j |z|^{k-j-m} \ge \delta^{-1} |z|^N \ge 1 \, .
        \end{equation*}
        We get that 
        \begin{align*}
            \max_{\zeta\in \C} \, \P\Big(\big|T_m^{(j)}(z) - \zeta \big| \le \delta  |z|^{m} \Big) &= \max_{\zeta\in \C} \, \P\Big(\Big|\sum_{k=m+1}^\infty \eps_k (k)_j z^{k-j} - \zeta \Big| \le \delta  |z|^{m} \Big) \\ & \le \max_{\zeta\in \C} \, \P\Big(\Big|\sum_{k=m+1}^{m+N} \eps_k \big(\delta  |z|^{m}\big)^{-1} (k)_j z^{k-j} - \zeta \Big| \le 1 \Big)  \stackrel{\eqref{eq:small_ball_bound}}{\le} \frac{C}{\sqrt{N}} 
        \end{align*}
        which, by our choice of $N$, proves~\eqref{eq:anti_concentration_for_T_m_j}. 

        Denote by $\mathbf{Z}_{m,j,r}$ the (random) collection of roots of $P_m^{(j)}$ inside $(e^{r-1}r)\D$. With this notation, Lemma~\ref{lemma:existence_of_bad_critical_point} implies that, for all $m$ large enough,
        \begin{equation}
        \label{eq:proof_of_lemma_E_has_prob_zero_application_of_bad_critical_point}
            \P\big(\mathcal{E}_{r,j}(\gamma)\big) \le \P\Big(\exists w\in \mathbf{Z}_{m,j,r} \, : \,  |P_m^{(j-1)}(w) + T_m^{(j-1)}(w)| \le C(r,j) \,  m^{2j} (e^{2(r-1)}|w|)^{2m} \Big) \, .
        \end{equation}
        Let $\mathcal{F}_m = \sigma(\eps_0,\eps_1,\ldots,\eps_m)$ be the $\sigma$-algebra generated by the first $m+1$ random signs. Note that $P_m$ (and, hence, also $\mathbf{Z}_{m,j,r}$) is measurable with respect to $\mathcal{F}_m$, while $T_m$ is independent of it. Furthermore, by Claim~\ref{claim:Jensen_bound_on_number_of_roots} we know that $|\mathbf{Z}_{m,j,r}| \le C(r,j)$ for all realizations of $P_m$. Thus, we can apply the law of total expectation together with a union bound, and get that 
        \begin{multline*}
            \P\Big(\exists w\in \mathbf{Z}_{m,j,r} \, : \,  |P_m^{(j-1)}(w) + T_m^{(j-1)}(w)| \le C(r,j) \,  m^{2j} (e^{2(r-1)}|w|)^{2m} \Big) \\  = \E\Big[\P\Big(\exists w\in \mathbf{Z}_{m,j,r} \, : \,  |P_m^{(j-1)}(w) + T_m^{(j-1)}(w)| \le C(r,j) \,  m^{2j} (e^{2(r-1)}|w|)^{2m} \mid \mathcal{F}_m\Big)\Big] \\ \le \E\Big[\sum_{w\in \mathbf{Z}_{m,j,r}} \P\Big(|P_m^{(j-1)}(w) + T_m^{(j-1)}(w)| \le C(r,j) \,  m^{2j} (e^{2(r-1)}|w|)^{2m} \mid \mathcal{F}_m\Big)\Big] \\ \le C(r,j) \, \max_{|w| \le e^{r-1}r} \max_{\zeta\in \C}  \, \P\Big(|T_m^{(j-1)}(w) - \zeta| \le C(r,j) \,  m^{2j} (e^{2(r-1)}|w|)^{2m}\Big).  
        \end{multline*}
        It remains to note that the last term tends to $0$ as $m\to\infty$. Indeed, plugging into~\eqref{eq:anti_concentration_for_T_m_j} with $\delta = Cm^{2j} (e^{4(r-1)}|w|)^m$ while noting that $e^{4(r-1)}|w| \le e^{5(r-1)}r<1$, we get that
        \[
        \max_{|w| \le e^{r-1}r} \max_{\zeta\in \C}  \P\Big(|T_m^{(j-1)}(w) - \zeta| \le C(r,j) \,  m^{2j} (e^{2(r-1)}|w|)^{2m}\Big) \le C(r,j) \big(\log(1/\delta)\big)^{-1/2} \le \frac{C(r,j)}{\sqrt{m}} \, . 
        \]
        In view of~\eqref{eq:proof_of_lemma_E_has_prob_zero_application_of_bad_critical_point}, we get that 
        \[
        \P\big(\mathcal{E}_{r,j}(\gamma)\big) \le \frac{C(r,j)}{\sqrt{m}} 
        \]
        for all $m$ large enough, and hence $\P\big(\mathcal{E}_{r,j}(\gamma)\big) = 0$ as we wanted to show. 
    \end{proof}

    \section{Point process convergence}
    \label{sec:point_process_convergence}
    \noindent
    In this section we complete the remaining details in the proof of Theorem~\ref{thm:tagged_point_process_convergence}, which says that as $n\to\infty$ the random closed sets
    \[
    \Big\{\big(\alpha, T_{n,\alpha}(R_n)\big) \, : \, \alpha\in \mathcal{Z}(F_n) \Big\}  
    \]
    converge in distribution, with respect to the vague topology on point processes on $\D \times \textbf{CL}(\C)$, to
    \[
    \Big\{\big(\alpha, \mathcal{D}_\alpha\big) \, : \, \alpha\in \mathcal{Z}(F) \Big\} \, .
    \]
    Let $C_{c}(\D\times \mathbf{CL}(\C))$ denote the space of continuous and compactly supported functions $g:\D \times \textbf{CL}(\C) \to \R$. To check the above convergence, we need to show that for any $g\in C_{c}(\D\times \textbf{CL})$ we have that
    \begin{equation}
        \label{eq:vague_convergence_of_pp_what_we_need_to_show}
        \sum_{\alpha \in \mathcal{Z}(F_n)} g\big( \alpha, T_{n,\alpha}\big(R_n\big) \big) \xrightarrow[n\to \infty]{ \ d \ } \sum_{\alpha\in \mathcal{Z}(F)} g\big(\alpha, \mathcal{D}_\alpha\big)
    \end{equation}
    where the convergence is in distribution just for a sequence of random variables. In fact, since both $\{F_n\}$ and $F$ are coupled together by the same sequence of random coefficients, we will prove the stronger statement
    \begin{equation}
        \label{eq:almost_sure_convergence_for_pp_test_functions}
        \sum_{\alpha \in \mathcal{Z}(F_n)} g\big( \alpha, T_{n,\alpha}\big(R_n\big) \big) \xrightarrow[n\to \infty]{ \ a.s. \ } \sum_{\alpha\in \mathcal{Z}(F)} g\big(\alpha, \mathcal{D}_\alpha\big) \, ,
    \end{equation}
    where the almost-sure (a.s.) convergence here is with respect to the sequence of random coefficients $\{\eps_k\}$. Since almost-sure convergence implies convergence in distribution, we get that~\eqref{eq:vague_convergence_of_pp_what_we_need_to_show} follows, and it remains to show~\eqref{eq:almost_sure_convergence_for_pp_test_functions} for all $g\in C_c(\D\times \textbf{CL}(\C))$.

    We first deal with almost-sure convergence of the roots themselves.
    \begin{claim}
        \label{claim:convergence_of_roots_in_compacts}
        Let $F_n$ and $F$ be given by~\eqref{eq:reminder_of_F_n_and_F}. Then almost surely for each compact set $K\subset \D$ there exists a labeling of the roots
        \begin{equation*}
            K \cap \mathcal{Z}(F) = \big\{\alpha_1,\ldots,\alpha_m \big\} \qquad \text{and} 
            \qquad K \cap \mathcal{Z}(F_n) = \big\{\alpha_{1,n},\ldots,\alpha_{m,n} \big\} 
        \end{equation*}
        with $m<\infty$, so that for each $1\le j \le m$ we have
        $ \displaystyle       \lim_{n\to \infty} \alpha_{j,n} = \alpha_j \, .
        $
    \end{claim}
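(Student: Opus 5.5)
The plan is to combine Theorem~\ref{thm:no_double_roots_almost_surely} with Fact~\ref{fact:almost_sure_convergence_of_roots} (Hurwitz), after reducing to a countable family of compact sets. We work on the almost sure event $\Omega_0$ on which $F$ has no double zeros in $\D$. On $\Omega_0$, every root of $F$ is simple. The multiplicity bookkeeping in Fact~\ref{fact:almost_sure_convergence_of_roots} then says that each root of $F$ attracts exactly one root of $F_n$ for $n$ large, which is what makes a one-to-one labeling possible.

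The main subtlety is the boundary issue. Fact~\ref{fact:almost_sure_convergence_of_roots} requires that $F$ have no roots on $\partial K$. For an arbitrary compact $K$ this fails, and even the statement can fail if a root of $F$ lies on $\partial K$: the approximating roots of $F_n$ may oscillate in and out of $K$. The claim should therefore be read with this caveat.

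We first reduce to disks $\overline{\D(0,r)}$. Since $F$ has only finitely many zeros in each $\overline{\D(0,r)}$ (Claim~\ref{claim:Jensen_bound_on_number_of_roots}), all but countably many radii $r\in(0,1)$ satisfy $F\ne 0$ on $\{|z|=r\}$. For such a radius, Fact~\ref{fact:almost_sure_convergence_of_roots} gives that $F_n$ and $F$ have the same number $m$ of roots in $\overline{\D(0,r)}$ for all large $n$.

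Next we match the roots. Let $\alpha_1,\dots,\alpha_m$ be the roots of $F$ in $\overline{\D(0,r)}$, all simple on $\Omega_0$. Choose $\eps$ so small that the disks $\D(\alpha_j,\eps)$ are disjoint and contained in $\D(0,r)$. The second part of Fact~\ref{fact:almost_sure_convergence_of_roots} gives exactly one root $\alpha_{j,n}$ of $F_n$ in each $\D(\alpha_j,\eps)$ for $n$ large. Counting shows these exhaust the roots of $F_n$ in the disk. Letting $\eps\downarrow 0$ along a sequence yields $\alpha_{j,n}\to\alpha_j$. For the finitely many small $n$ the labeling is arbitrary, which does not affect the limit.

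Finally we pass to a general compact $K$ with $F\ne0$ on $\partial K$, which is the case needed later. Pick an admissible $r$ with $K\subset\D(0,r)$ and restrict the labeling above. Roots $\alpha_j\in\Int K$ have $\alpha_{j,n}\in K$ eventually. Roots $\alpha_j\notin K$ have $\alpha_{j,n}\notin K$ eventually, since $K$ is closed. This gives the claimed labeling of $K\cap\mathcal{Z}(F_n)$ for $n$ large.

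For a fully general $K$, we would instead state the result for the roots of $F_n$ near $K$, or note that in the application the test function $g$ has compact support. One can then enlarge the support to a compact set with no roots of $F$ on its boundary, which is always possible by the countability of bad radii or level sets.
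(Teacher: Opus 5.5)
Your proof is correct and is essentially the paper's argument: Theorem~\ref{thm:no_double_roots_almost_surely} gives only simple zeros almost surely, and Fact~\ref{fact:almost_sure_convergence_of_roots} then pairs each zero of $F$ with exactly one nearby zero of $F_n$; you simply carry it out more carefully. Your caveat about zeros on $\partial K$ is well founded, since the paper's one-line proof tacitly needs the boundary hypothesis of Fact~\ref{fact:almost_sure_convergence_of_roots}, and the literal claim fails for $K=\{\alpha_1\}$ because $F_n(\alpha_1)=F_{n+1}(\alpha_1)=0$ would force $\eps_{n+1}\alpha_1^{n+1}=0$; your fix of passing to a closed disk $\overline{\D(0,r)}$ with $F\neq 0$ on $\{|z|=r\}$ is exactly what the application needs, because $g(\alpha,\cdot)\equiv 0$ whenever $\alpha\notin\Int\,\pi_1(\supp g)$.
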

    \begin{proof}
        For each compact set $K\subset \D$ we have
        \[
        \sup_{z\in K}|F(z) - F_n(z)| \le \sup_{z\in K} \sum_{k=n+1}^\infty |z|^k \xrightarrow{n\to \infty} 0 \, .
        \]
        Furthermore, Theorem~\ref{thm:no_double_roots_almost_surely} implies that almost surely $F$ has no double roots in $K$, and the claim now follows from Hurwitz's theorem (Fact~\ref{fact:almost_sure_convergence_of_roots}). 
    \end{proof}
    \begin{proof}[Proof of Theorem~\ref{thm:tagged_point_process_convergence}]
        Let $g\in C_{c}(\D \times \textbf{CL}(\C))$. By the discussion in the beginning of the section, it is enough to show~\eqref{eq:almost_sure_convergence_for_pp_test_functions}. Denote by $\pi_1$ projection onto the first coordinate and set $K = \pi_1(\supp(g))$. Since $K\subset \D$ is a projection of a compact set, it is also compact. Claim~\ref{claim:convergence_of_roots_in_compacts} guarantees the almost-sure convergence of the roots of $F_n$ to the roots of $F$ in $K$, so the convergence~\eqref{eq:almost_sure_convergence_for_pp_test_functions} will follow once we show that for each $1\le j \le m$
        \begin{equation} \label{eq:convergence_of_labels_in_fell_topology}
            \Lim_{n\to \infty} T_{n,\alpha_{j,n}} (R_n) = \mathcal{D}_{\alpha_j}
        \end{equation}
        in the Kuratowski sense~\eqref{eq:kuratowski_convergence}. We also note that, by Claim~\ref{claim:Jensen_bound_on_number_of_roots}, $m$ is bounded by a (non-random) constant which depends only on $K\subset \D$. 
        Recall that $R_n$ is the random set~\eqref{eq:def_of_R_n} of possible root extensions, and that $\mathcal{D}_{\alpha_j}$ is the randomly rescaled dragon curve
        \begin{equation} \label{eq:recall_def_of_scaled_dragon}
            \mathcal{D}_{\alpha_j} = \frac{1}{F^\prime(\alpha_j)} D_{\alpha_j} = \Big\{ \frac{f(\alpha_j)}{F^\prime(\alpha_j)} \, : \, f\in \mathcal{L}_\infty \Big\} \, .
        \end{equation}
        Denote by
        \begin{equation} \label{eq:def_of_delta}
            \delta = \min_{1\le j\le m}  |F^\prime(\alpha_j)| \, .
        \end{equation}
        Theorem~\ref{thm:no_double_roots_almost_surely} implies that $\delta>0$ almost surely. In particular, almost surely $F$ is $\delta$-good in $K$, in the sense of Definition~\ref{def:kappa_good}. Theorem~\ref{thm:main_deter_result} then shows that there exists $r_0>0$ so that for each $1\le j \le m$ we have
        \begin{equation*}
            \lim_{n\to \infty} \dH\Big( T_{n,\alpha_{j,n}}\big(R_n \cap \D(\alpha_j,r_0)\big), \mathcal{D}_{\alpha_j} \Big) = 0 \, .
        \end{equation*}
        Since convergence in the Hausdorff metric topology implies convergence in the Fell topology (see~\cite[Chapter~5]{Beer-topology-book} for a simple proof), we get that 
        \begin{equation} \label{eq:dragon_contained_in_liminf}
            \mathcal{D}_{\alpha_j} \subset \text{Li} \, \Big(T_{n,\alpha_{j,n}}\big(R_n \cap \D(\alpha_j,r_0)\big) \Big) \subset \text{Li} \, \big(T_{n,\alpha_{j,n}}(R_n) \big) \, .
        \end{equation}
        \begin{claim}
        \label{claim:dragon_contains_limit_superior}
        Almost surely for all $j=1,\ldots,m$ we have $\normalfont \text{Ls}\big( T_{n,\alpha_{j,n}}(R_n) \big) \subset \mathcal{D}_{\alpha_j}  \, .$
        \end{claim}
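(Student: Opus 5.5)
Fix $j$ and a point $w\in \text{Ls}\big(T_{n,\alpha_{j,n}}(R_n)\big)$. By definition there is a subsequence $n_k\to\infty$ and points $z_k\in R_n$ (with $n=n_k$) such that $T_{n_k,\alpha_{j,n_k}}(z_k)\to w$. That is,
\[
z_k = \alpha_{j,n_k} + \alpha_{j,n_k}^{n_k+1}\big(w+o(1)\big).
\]
Since $|\alpha_{j,n_k}|\le r<1$ for large $k$, this forces $|z_k-\alpha_{j,n_k}|\to 0$. By Claim~\ref{claim:convergence_of_roots_in_compacts} we also have $\alpha_{j,n_k}\to\alpha_j$, so $z_k\to\alpha_j$. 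In particular, for all large $k$ we get $z_k\in\D(\alpha_j,r_0)$, where $r_0$ is the radius from Theorem~\ref{thm:main_deter_result} applied with $\kappa=\delta$ from~\eqref{eq:def_of_delta}. Hence $z_k\in R_{n_k}\cap\D(\alpha_j,r_0)$ eventually. The essential point is that any sequence of roots whose rescaled positions stay bounded must collapse onto $\alpha_j$, so it is captured by the local picture.

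Once this localization is in hand, we use the Hausdorff convergence already established:
\[
\dH\Big(T_{n,\alpha_{j,n}}\big(R_n\cap\D(\alpha_j,r_0)\big),\mathcal{D}_{\alpha_j}\Big)\to0 .
\]
It follows that $\text{dist}\big(T_{n_k,\alpha_{j,n_k}}(z_k),\mathcal{D}_{\alpha_j}\big)\to0$. The set $\mathcal{D}_{\alpha_j}$ is compact, since it is a scaled copy of the compact attractor $D_{\alpha_j}$. Therefore $\text{dist}(w,\mathcal{D}_{\alpha_j})=0$, and $w\in\mathcal{D}_{\alpha_j}$. There is one minor point to check: $R_n$ might not be closed, and Ls is defined via $\text{dist}(w,A_n)$. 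But $\text{dist}(w,A_n)\to0$ along a subsequence already yields points $z_k$ in $A_n$ itself, not merely in its closure, so nothing changes.

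I expect the main obstacle to be the first step, namely confirming that every point of the rescaled set near a bounded $w$ comes from $z_k$ close to $\alpha_j$. This amounts to checking that $|\alpha_{j,n}|^{n+1}|w|\to0$ uniformly, which holds because $\alpha_{j,n}$ stays in a compact subset of $\D$. One also needs the a.s.\ events (no double roots, root convergence) to hold simultaneously for all $j\le m$, which is immediate since $m<\infty$.

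Combining the resulting inclusion $\text{Ls}\subset\mathcal{D}_{\alpha_j}$ with~\eqref{eq:dragon_contained_in_liminf} and $\text{Li}\subset\text{Ls}$ gives~\eqref{eq:convergence_of_labels_in_fell_topology}.
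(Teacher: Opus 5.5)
Your proof is correct and follows essentially the same two-part structure as the paper's argument. Your localization step (a bounded rescaled limit $w$ forces $z_k\to\alpha_j$) is the contrapositive of the paper's Case II, which shows that roots with $|\xi-\alpha|\ge r_0$ satisfy $|T_{n,\alpha_n}(\xi)|\to\infty$. Your use of the local Hausdorff convergence plays the role of Case I. The only difference is that you argue directly from the sequential definition of $\text{Ls}$ rather than through the hit-and-miss criterion, which is slightly more elementary and also handles the harmless point that $R_n$ need not be closed in $\C$.
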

    \noindent
    We provide the proof of Claim~\ref{claim:dragon_contains_limit_superior} below. To finish the proof of Theorem~\ref{thm:tagged_point_process_convergence}, we note that we always have $\text{Li} \, \big(T_{n,\alpha_{j,n}}(R_n) \big) \subset \text{Ls} \, \big(T_{n,\alpha_{j,n}}(R_n) \big)$, and by combining~\eqref{eq:dragon_contained_in_liminf} with Claim~\ref{claim:dragon_contains_limit_superior} we get that
    \[
    \mathcal{D}_{\alpha_j} =\text{Li} \, \big(T_{n,\alpha_{j,n}}(R_n) \big) = \text{Ls} \, \big(T_{n,\alpha_{j,n}}(R_n) \big) = \Lim_{n\to \infty} T_{n,\alpha_{j,n}}(R_n)
    \]
    in the sense of Kuratowski. This proves~\eqref{eq:convergence_of_labels_in_fell_topology}, and since $g:\D\times \textbf{CL}(\C)$ is continuous, we get that 
    \begin{equation*}
        \lim_{n\to\infty} \sum_{j=1}^m g\big(\alpha_{j,n},  T_{n,\alpha_{j,n}} (R_n)\big) = \sum_{j=1}^m g\big(\alpha_j, \mathcal{D}_{\alpha_j}\big) 
    \end{equation*}
    almost surely, 
    which proves~\eqref{eq:almost_sure_convergence_for_pp_test_functions}. As we explained in the beginning of this section, this implies the point process convergence of Theorem~\ref{thm:tagged_point_process_convergence} and we are done. 
    \end{proof}
    \begin{proof}[Proof of Claim~\ref{claim:dragon_contains_limit_superior}]
        To simplify notation, we drop the dependence on $1\le j \le m$, and show that for any sequence $\{\alpha_n\}\subset K$ such that $F_n(\alpha_n) = 0$ and $\alpha_n\to \alpha$ we have 
        \begin{equation} \label{eq:limsup_is_contained_in_dragon_no_j}
        \text{Ls}\big(T_{n,\alpha_n}(R_n)\big) \subset \mathcal{D}_\alpha \, .    
        \end{equation}
        We will use the ``hit-and-miss" criterion for the Fell topology: we have that~\eqref{eq:limsup_is_contained_in_dragon_no_j} holds if and only if for every compact set $\mathcal M\subset \C$ such that $\mathcal{M} \cap \mathcal{D}_\alpha = \emptyset$ there exists $n_0\in \N$ so that for all $n\ge n_0$
        \begin{equation*}
            \mathcal{M} \cap T_{n,\alpha_n}(R_n) = \emptyset \, .
        \end{equation*}
        By unpacking the definition~\eqref{eq:def_of_R_n}, this just means that for any $f\in \mathcal{L}_\infty$ and for any root $\xi$ of $G_{n,f}(z) = F_n(z) + z^{n+1} f(z)$ we have
        \begin{equation} \label{eq:every_root_of_G_n_f_avoids_M}
            T_{n,\alpha_n}(\xi) \not\in \mathcal{M}
        \end{equation}
        for all $n\ge n_0$. In view of Lemma~\ref{lemma:Rouche_for_littlewood_poly_and_remainder}, the proof of~\eqref{eq:every_root_of_G_n_f_avoids_M} splits into two cases: 
            \\[0.5em] 
        \underline{Case I:} Suppose first that the root $\xi$ is the unique root of $G_{n,f}$ in the disk $\D(\alpha, r_0)$. In the proof of Theorem~\ref{thm:main_deter_result}, we denoted this root by $\beta_{n,f}$, and showed that
        \[
        \lim_{n\to \infty} \sup_{f\in \mathcal{L}_\infty} \Big|T_{n,\alpha_n}(\beta_{n,f}) + \frac{f(\alpha)}{F^\prime(\alpha)}\Big| = 0 \, ,
        \]
        see~\eqref{eq:uniform_limit_of_scaled_points_is_dragon}. In particular, by~\eqref{eq:recall_def_of_scaled_dragon} the above display shows that
        \begin{equation} \label{eq:dist_between_scaled_close_root_and_dragon}
        \lim_{n\to \infty} \sup_{f\in \mathcal{L}_\infty} \, \text{dist}\big(T_{n,\alpha_n}(\xi) , \mathcal{D}_\alpha\big) = 0 \, .
        \end{equation}
        Since both $\mathcal{M}$ and $\mathcal{D}_\alpha$ are compact sets with $\mathcal{M} \cap \mathcal{D}_\alpha  = \emptyset$, we also have that $\text{dist}(\mathcal{M}, \mathcal{D}_\alpha)>0$. Combining this with~\eqref{eq:dist_between_scaled_close_root_and_dragon}, we get that~\eqref{eq:every_root_of_G_n_f_avoids_M} holds for all $n$ large enough. 
    \\[0.5em] 
    \underline{Case II:}
    Suppose now that $\xi$ is a root of $G_{n,f}$ such that $|\xi - \alpha| \ge r_0$, for $r_0>0$ given by Lemma~\ref{lemma:Rouche_for_littlewood_poly_and_remainder}. Note that $r_0$ depends on $\delta$ given by~\eqref{eq:def_of_delta}, but we still have almost surely that $r_0>0$, which is all that we shall use. Claim~\ref{claim:distance_between_littlewood_roots} implies that
    \[
    |\xi - \alpha_n| \ge |\xi -\alpha| - |\alpha - \alpha_n| \ge r_0 - C \delta^{-1} |\alpha|^{n+1} \ge r_0/2
    \]
    for all $n$ large enough. In particular, we get that
    \[
    |T_{n,\alpha_n}(\xi)| = |\alpha_n|^{-n-1} |\xi - \alpha_n| \ge \frac{1}{4} |\alpha|^{-n-1} r_0 \, .
    \]
    Since the right-hand side of the above display tends to infinity as $n\to \infty$, we see that for all $n$ large enough~\eqref{eq:every_root_of_G_n_f_avoids_M} holds, as $\mathcal{M}$ is a compact set. 
    
    All in all, we proved that~\eqref{eq:every_root_of_G_n_f_avoids_M} holds for every root of possible roots of $\{G_{n,f}\}_{f\in \mathcal{L}_\infty}$, which in turn proved the inclusion~\eqref{eq:limsup_is_contained_in_dragon_no_j} and we are done. 
    \end{proof}
   
    \subsection*{Acknowledgments} MM is supported in part by NSF grants DMS-2336788 and DMS-2246624. OY is supported in part by NSF postdoctoral fellowship DMS-2401136. The authors would also like to thank ICERM, where the first conversations leading to this work took place during the workshop \emph{Random polynomials and their applications} in August 2025.

\bibliographystyle{abbrv}
\bibliography{random_polynomials}

    \vspace{3mm}
\end{document}